\newtheorem{theorem}{Theorem}[section]
\newtheorem{lemma}[theorem]{Lemma}
\newtheorem{definition}[theorem]{Definition}
\newtheorem{remark}[theorem]{Remark}
\newtheorem{proposition}[theorem]{Proposition}
\numberwithin{equation}{section}
\newcommand{\bb}[1]{\mathbb{#1}}
\newcommand{\cl}[1]{\mathcal{#1}}
\begin{document}
\title[]{Perturbations of Toeplitz operators on vector-valued Hardy spaces}

\author{Arshad Khan}
\address{Department Of Mathematics\\
         School of Natural Sciences\\
        Shiv Nadar Institution Of Eminence\\
        Gautam Buddha Nagar - 201314\\
         Uttar Pradesh, India}
\email{ak954@snu.edu.in}

\author{Sneh Lata}
\address{Department Of Mathematics\\
         School of Natural Sciences\\
         Shiv Nadar Institution Of Eminence\\
         Gautam Buddha Nagar - 201314\\
         Uttar Pradesh, India}
\email{sneh.lata@snu.edu.in}

\author{ Dinesh Singh}
\address{Centre For Digital Sciences\\
      O. P. Jindal Global University\\
   Sonipat\\
        Haryana 131001, India}
\email{dineshsingh1@gmail.com}

\subjclass{Primary 47A15, 47A55; Secondary 30H10, 47B35, 47B38}

\keywords{Perturbation of Toeplitz operator, Hardy spaces, backward shift, shift operator, nearly invariant subspaces, almost invariant subspaces.}

\begin{abstract}
In this article, we completely classify invariant subspaces of finite-rank perturbations of a class of Toeplitz operators on vector-valued Hardy spaces. As a consequence, 
in the vector-valued setting, we characterize invariant and almost invariant subspaces of a class of Toeplitz operators, as well as nearly invariant subspaces associated with certain Blaschke-based operators. We further treat the finite defect case for these nearly invariant subspaces.
\end{abstract}

\maketitle

\section{Introduction} Perturbations of Toeplitz operators on Hardy spaces have engendered a great deal of interest because–as is well known–they help us understand how small changes in these operators affect their properties, like spectrum or invertibility, which are crucial for solving equations in signal processing and control theory. They’re also key in studying stability in function spaces, where even tiny tweaks can impact convergence or approximation in applications like harmonic analysis or quantum mechanics. The basic ideas in this context are available in the volumes by B\"{o}ttcher and Silbermann \cite{BS} and Rosenblum and 
Rovnyak \cite{RR1}. We also refer to the paper by Axler, Chang, and Sarason \cite{AS}.  

In this article, we explore and establish interesting properties and connections between perturbation theory, invariant subspaces, nearly invariant subspaces and almost 
invariant subspaces in the context of certain Toeplitz operators on vector-valued Hardy spaces $H^2(\bb D, \bb C^m)$. See Section \ref{state} for the statements of our main results. It is worth mentioning that Hitt \cite{Hit} introduced nearly invariant subspaces but called them \textit{weakly invariant}. Subsequently, Sarason \cite{Sar} rechristened them with the now universally accepted label \textit{nearly invariant} subspaces and he introduced new ideas into their study by establishing a new proof via de Branges-Rovnyak spaces. A subspace $\cl M$ of the Hardy space $H^2(\bb D)$ is called nearly invariant under the backward shift operator $T_z^*$ on $H^2(\bb D)$ if $T_z^*(f)$ belongs to 
$\cl M$ whenever $f\in \cl M$ and vanishes at zero. Since then, numerous generalizations have been proposed in this direction. For further insights into 
nearly invariant subspaces, one can refer to \cite{CCP}, \cite{CGP}, \cite{CD}, \cite{CDP1}, \cite{DS}, \cite{Era}, \cite{KLD}, \cite{LP}, \cite{LP1}, \cite{LP2}, \cite{Sar} and the references therein. We also refer to two recent related papers \cite{CDP2} and \cite{LP3}.  

A transformative feature of nearly invariant subspaces of the backward shift operator on $H^2(\bb D)$ observed and used by Sarason is their connection with a particular rank-one perturbation of the backward shift operator. Inspired by this masterful insight of Sarson, Das and Sarkar \cite{DS} have recently investigated the invariant subspaces  
of certain Toeplitz operators, a generalization of nearly invariant subspaces, and almost invariant subspaces on $H^2(\bb D)$ via a classification of invariant subspaces of some finite-rank perturbations of certain Toeplitz operators that they derive in the same paper.  

Our work is inspired by the perturbative perspective developed in \cite{DS}, and we elaborate on this connection in the subsequent sections. In the present paper, we obtain a complete classification of the invariant subspaces of finite-rank perturbations of a class of Toeplitz operators on vector-valued Hardy spaces $H^2(\bb D, \bb C^m)$. In doing so, we uncover structural connections between perturbation theory, invariant subspaces, almost invariant subspaces, and nearly invariant subspaces in the vector-valued setting. While our approach is motivated by ideas introduced in \cite{DS} for the scalar case, the passage to the vector-valued framework involves additional technical considerations and a more intricate operator-theoretic structure. 

The organization of the rest of the paper is as follows. In Section \ref{Sec2}, we introduce the notations, and preliminaries used throughout the article. Section \ref{state} contains the statements of our main results.  

In Section \ref{Sec3}, we prove Theorems \ref{Th3.2} and \ref{Th3.6}, which provide a classification of invariant subspaces for finite-rank perturbations of a class of Toeplitz operators on the vector-valued Hardy space $H^2(\bb{D}, \bb{C}^m)$. We also establish three consequences of these results:  
\begin{enumerate}
\item[$(i)$] a characterization of invariant subspaces of finite-rank perturbations of a class of analytic Toeplitz operators on $H^2(\mathbb{D}, \mathbb{C}^m)$ (Theorem \ref{Th3.7}); 
\item[$(ii)$] a characterization of almost invariant subspaces of a class of Toeplitz operators on $H^2(\mathbb{D}, \mathbb{C}^m)$ (Theorem \ref{Th3.10});  
\item[$(iii)$] a characterization of nearly invariant subspaces associated with certain Blaschke-based operators on $H^2(\mathbb{D}, \mathbb{C}^m)$ (Theorem \ref{Th3.13}). 
\end{enumerate}

Section \ref{Sec4} is devoted to extending Theorem \ref{Th3.13} to the finite defect case (Theorems \ref{Th4.2}, \ref{Th4.4}, and \ref{Th4.5}). This finite defect analysis does not appear in \cite{DS}.  

Sections \ref{Sec3} and \ref{Sec4} also contain several significant remarks and crucial observations that highlight essential aspects of our findings.

\section{Preliminaries}\label{Sec2}
Let $\mathbb{D}$ be the open unit disc in the complex plane $\mathbb{C}$. Then the $\mathbb{C}^n$-valued Hardy space, denoted by 
$H^2(\bb{D}, \bb{C}^n)$, is the Hilbert space of $\bb {C}^n$-valued analytic functions defined on 
$\bb D$ with square summable Fourier coefficients in their power series representations. That is;
\begin{eqnarray*}
 H^2(\mathbb{D},\bb C^n):=\Bigg\{F=
 \sum_{i=0}^{\infty}A_iz^i : \ A_i \in \bb C^n, \ \sum_{i=0}^{\infty}\|A_i\|_{\bb C^n}^2 < \infty \Bigg\}.   
\end{eqnarray*}

For any $F(z)=\sum_{i=0}^{\infty}A_i z^i $ and $H(z)=
 \sum_{i=0}^{\infty}B_i z^i$ in $H^2(\mathbb{D},\bb C^n)$, the inner product of $F$ and $H$ is given 
 by
\begin{eqnarray*}
\langle{F,H}\rangle=\sum_{i=0}^{\infty}\langle{A_i, B_i}\rangle_{\bb C^n}.
\end{eqnarray*}

We shall use $\{E_i: 1\le i \le n\}$ to denote the standard orthonormal basis of $\bb C^n$. Note that each $E_i$ can be viewed as a function in $H^2(\bb D, \bb C^n)$ by identifying it with the constant function that is identically $E_i$. Furthermore, $\{E_jz^m:1\le j\le n, \ m\ge 0\}$ forms an orthonormal basis of the Hardy space $H^2(\mathbb{D}, \bb C^n).$ For $n=1$, we denote $H^2(\mathbb{D},\bb C^n)$ by $H^2(\mathbb{D})$.  

Throughout this paper, we shall frequently identify $H^2(\mathbb{D}, \bb C^n)$ with the Hilbert space tensor product $H^2(\mathbb{D})\otimes \bb C^n$ of $H^2(\mathbb{D})$ and $\bb C^n$. Indeed, the natural map $E_jz^m\mapsto E_j\otimes z^m$ extends to a unitary from $H^2(\mathbb{D}, \bb C^n)$ onto $H^2(\mathbb{D})\otimes \bb C^n$. One more identification of the vector-valued Hardy space is crucial for our present work. Suppose $p$ and $m$ are two natural numbers, then the Hardy spaces $H^2(\mathbb{D}, \mathbb{C}^{p+m})$ and $H^2(\mathbb{D}, \mathbb{C}^p)\oplus H^2(\mathbb{D}, \mathbb{C}^m)$ are unitarily equivalent, where the unitary is given by the map 
$\begin{pmatrix}
A \\ 
C
\end{pmatrix}z^n
\mapsto 
\begin{pmatrix}
 A z^n\\ 
C z^n
\end{pmatrix},$ where $A\in \mathbb{C}^p$ and $C\in \mathbb{C}^m$. Although under the latter identification, we write elements of $H^2(\bb D, \bb C^{p+m})$ as columns 
$\begin{pmatrix}
F\\H
\end{pmatrix}
$ with $F\in H^2(\bb D, \bb C^p)$ and $H\in H^2(\bb D, \bb C^m)$, frequently we also write them, for notational convenience, simply as tuples $(F, H)$. We shall interchangeably use the identification of the Hardy spaces and the notations without mention.

The latter identification allows one to view operators between these vector-valued Hardy spaces as row matrices. Suppose $\cl G$ is a bounded operator from $H^2(\mathbb{D}, \mathbb{C}^p)$ to $H^2(\mathbb{D}, \mathbb{C}^m)$ and $\cl J$ is a bounded operator from $H^2(\mathbb{D},\mathbb{C}^r)$ to 
$H^2(\mathbb{D}, \mathbb{C}^m)$, then 
we shall use the notation $[\cl G, \cl J]$ to denote the bounded operator from $H^2(\mathbb{D}, \mathbb{C}^{p+r})$ to $H^2(\mathbb{D}, \mathbb{C}^m)$ defined by  
$$
\begin{pmatrix}
F \\ 
H
\end{pmatrix}\mapsto \cl GF+\cl J H.
$$

Further, suppose $\{G_i:1\le i\le m\}$ is a set in $H^2(\mathbb{D}, \mathbb{C}^n)$ such that for each 
$f_1, \dots, f_m\in H^2(\mathbb{D})$, the function $\sum_{i=1}^m f_iG_i$ belongs to $H^2(\mathbb{D}, \mathbb{C}^n)$. Then we denote the map 
$$
\begin{pmatrix}
f_1 \\
\vdots\\
f_m
\end{pmatrix}
\mapsto 
\sum_{i=1}^mf_iG_i
$$ 
by a row matrix $[G_1, \dots, G_m]$, and call it a matrix with $m$ columns $G_1, \dots, G_m$.

We now turn to some specific bounded operators that we deal with in this work. The most essential non-trivial bounded operator that we study on $H^2(\mathbb{D})$ is the $\textit{Shift operator}$.
\begin{definition}
The Shift operator on $H^2(\mathbb{D})$, denoted by $T_z$, is defined as
\begin{eqnarray*}
(T_zf)(w)=wf(w), \quad f\in H^2(\mathbb{D}).
\end{eqnarray*}
The adjoint of $T_z$, denoted by $T^*_z$, is called the backward shift operator. It is given by
\begin{eqnarray*}
    (T^*_zf)(w)=\frac{f(w)-f(0)}{w}, \quad f\in H^2(\mathbb{D}).
\end{eqnarray*}
\end{definition}\label{Def2.1}

Let $B(\bb C^n,\bb C^m)$ denote the Banach algebra of bounded operators from $\bb C^n$ to $\bb C^m$. 
Then we denote the Banach algebra of $B(\bb C^n,\bb C^m)$-valued bounded analytic functions on 
$\mathbb{D}$ by $H^{\infty}(\mathbb{D},B(\bb C^n, \bb C^m))$, and 
call its elements \textit{multipliers}. Each multiplier $\Phi$ in 
$H^{\infty}(\mathbb{D},B(\bb C^n, \bb C^m))$ gives rise to a bounded operator 
$T_\Phi :H^2(\mathbb{D}, \bb C^n) \longrightarrow H^2(\mathbb{D}, \bb C^m)$ given by $$(T_{\Phi}f)(z)=\Phi(z)f(z),$$ known as the \textit{Toeplitz operator} with symbol $\Phi$. Its adjoint is denoted by $T^*_\Phi$. If $\phi\in H^\infty(\mathbb{D})$, then the operator $T_\phi\otimes I_{\bb C^n}$ is the Toeplitz operator on 
$H^2(\mathbb{D}, \bb C^n)$ induced by the multiplier $\Phi(z)(z^n E_j)=\phi(z)z^nE_j.$
In this paper, we shall mainly deal with these Toeplitz operators. Note that for $m=n=1$ and $\phi(z)=z,$ the Toeplitz operator with symbol $\phi$ is simply the shift operator $T_z$ on $H^2(\mathbb D).$ Lastly, a function in $H^\infty(\mathbb{D})$ is said to be an \textit{inner function} if the corresponding Toeplitz operator $T_\phi$ on $H^2(\bb D)$ is an isometry.

A bounded operator $T$ on a Hilbert space $\mathcal H$ is called a $C._0$ \textit{contraction} if 
for each $h\in \mathcal H$, $\|T^{*n}h\| \longrightarrow 0$ as $n \longrightarrow \infty$. A 
$C._0$ isometry is called a \textit{pure isometry}. We end this section by quoting an interesting result of Benhida and Timotin on $C._0$ \textit{contraction} \cite{BT} that is indispensable for our work in the following sections.

\begin{lemma}\cite[Lemma 3.3]{BT}\label{Lem2.2}
Suppose $T$ is a bounded operator on a Hilbert space $\mathcal{H}$. Let $T$ be a $C._{0}$ contraction such that the rank of $I-T^*T$ is finite, and let $\mathcal{M}\subset \mathcal{H}$ be a subspace of finite codimension. Then $TP_\mathcal{M}$ is a $C._{0}$ contraction, where $P_\mathcal{M}$ is the orthogonal projection of $\mathcal{H}$ onto $\mathcal{M}$.   
\end{lemma}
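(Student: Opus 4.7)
The plan is to verify the two defining properties of a $C_{\cdot 0}$ contraction for $A := TP_\mathcal{M}$. Contractivity is immediate: $\|Ah\| \le \|P_\mathcal{M} h\| \le \|h\|$. The essential task is to prove $(P_\mathcal{M} T^*)^n h \to 0$ in norm for every $h \in \mathcal{H}$.

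Setting $Q := I - P_\mathcal{M}$ (whose range $\mathcal{N} := \mathcal{M}^\perp$ is finite-dimensional) and $f_n := (P_\mathcal{M} T^*)^n h$, I would first combine the Pythagorean identity $\|P_\mathcal{M} T^* f\|^2 = \|T^* f\|^2 - \|QT^* f\|^2$ with the defect identity $\|T^* f\|^2 = \|f\|^2 - \|(I-TT^*)^{1/2} f\|^2$ to obtain the step-wise energy identity
\[
\|f_{n+1}\|^2 = \|f_n\|^2 - \bigl\|(I-TT^*)^{1/2} f_n\bigr\|^2 - \|QT^* f_n\|^2.
\]
Telescoping yields $\sum_n \|QT^* f_n\|^2 \le \|h\|^2 < \infty$, so $QT^* f_n \to 0$, i.e., $f_{n+1} - T^* f_n \to 0$. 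The norms $\|f_n\|$ decrease monotonically to some $c \ge 0$; the remaining task is to show $c = 0$.

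Suppose, for contradiction, $c > 0$. By Banach--Alaoglu, along some subsequence $f_{n_k} \rightharpoonup g \in \mathcal{H}$. Using weak continuity of $T^*$ together with $f_{n+1} - T^* f_n \to 0$, induction on $j$ gives $f_{n_k + j} \rightharpoonup T^{*j} g$ for every $j \ge 0$; lower semicontinuity of the norm forces $\|T^{*j} g\| \le c$, while the $C_{\cdot 0}$ hypothesis on $T$ gives $T^{*j} g \to 0$ strongly. To extract the contradiction, I would leverage the finite-rank hypothesis on $I - T^*T$ via the Sz.-Nagy--Foias functional model: up to unitary equivalence $T = P_{\mathcal{K}_\Theta} M_z|_{\mathcal{K}_\Theta}$ on a model subspace of $H^2(\mathbb{C}^r)$ with $r = \mathrm{rank}(I - T^*T) < \infty$, whence $T^* = M_z^*|_{\mathcal{K}_\Theta}$. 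Extending $\mathcal{M}$ to a subspace $\mathcal{M}'$ of $H^2(\mathbb{C}^r)$ of the same codimension, the iteration $(P_\mathcal{M} T^*)^n h$ coincides with $(P_{\mathcal{M}'} M_z^*)^n h$ for $h \in \mathcal{K}_\Theta$, reducing the problem to the concrete setting of the backward shift on a finite-multiplicity Hardy space.

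The main obstacle is this reduced statement: proving that compressions of the backward shift on $H^2(\mathbb{C}^r)$ by finite-codimensional projections are themselves $C_{\cdot 0}$. Fixing an orthonormal basis $\{v_1, \ldots, v_k\}$ of $\mathcal{N}'$ and writing $f_n = \sum_j a_j^{(n)} z^j$, the coefficient-level recursion
\[
a_j^{(n+1)} = a_{j+1}^{(n)} - \sum_{i=1}^k c_i^{(n)} v_i(j), \qquad c_i^{(n)} := \langle M_z^* f_n, v_i\rangle,
\]
combined with the summability $\sum_n (\|a_0^{(n)}\|^2 + \sum_i |c_i^{(n)}|^2) < \infty$ coming from the energy identity, inductively yields $a_j^{(n)} \to 0$ for each fixed $j$. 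The delicate final step is upgrading this coefficient-wise decay to norm decay; this is accomplished by using the uniform bound $\|f_n\|^2 \le \|h\|^2$ together with the specific structure of the recursion to preclude mass escaping to higher Taylor degrees.
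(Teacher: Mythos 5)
The paper does not prove this statement at all: it is quoted verbatim from Benhida--Timotin \cite[Lemma 3.3]{BT} and used as a black box, so there is no in-paper argument to compare yours against. Judged on its own, your opening is sound: contractivity of $TP_\mathcal{M}$, the energy identity $\|f_{n+1}\|^2=\|f_n\|^2-\|(I-TT^*)^{1/2}f_n\|^2-\|QT^*f_n\|^2$, the telescoping bounds, and the consequence $f_{n+1}-T^*f_n\to 0$ are all correct. But the proof breaks down at the two places where the hypotheses have to do real work. First, the reduction to a finite-multiplicity Hardy space rests on the wrong defect operator: the Sz.-Nagy--Foias embedding $h\mapsto\sum_n (D_{T^*}T^{*n}h)z^n$ realizes a $C._{0}$ contraction inside $H^2(\mathcal{D}_{T^*})$, whose multiplicity is $\operatorname{rank}(I-TT^*)$, not the hypothesized $\operatorname{rank}(I-T^*T)$. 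These can differ drastically: a pure isometry of infinite multiplicity has $I-T^*T=0$ but $\operatorname{rank}(I-TT^*)=\infty$, and this is exactly the situation in which the paper invokes the lemma ($T=T_\phi\otimes I_{\mathbb{C}^m}$ with $\phi$ inner, where $H^2(\mathbb{D},\mathbb{C}^m)\ominus T_\Phi H^2(\mathbb{D},\mathbb{C}^m)$ is infinite-dimensional unless $\phi$ is a finite Blaschke product). So your reduction to $H^2(\mathbb{C}^r)$ with $r<\infty$ fails precisely in the paper's main use case, and note that your chain of identities only ever involves $I-TT^*$, so the actual hypothesis $\operatorname{rank}(I-T^*T)<\infty$ never genuinely enters your argument.

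Second, even granting the reduction, the decisive step is missing. Your weak-compactness argument only yields a weak cluster point $g$ with $\|T^{*j}g\|\le c$ and $T^{*j}g\to 0$, which is no contradiction, and the final paragraph concedes that coefficient-wise decay $a_j^{(n)}\to 0$ must be upgraded to norm decay. That upgrade is the entire content of the lemma: a uniform norm bound plus weak (coefficient-wise) convergence to zero does not imply strong convergence, and ``the specific structure of the recursion precludes mass escaping to higher Taylor degrees'' is an assertion of the conclusion rather than a proof of it. This is where the finite codimension of $\mathcal{M}$ and the finite-rank defect must be exploited in an essential way, and no mechanism for doing so is supplied. As it stands the proposal establishes $\|f_n\|\downarrow c$ and some summability estimates, but does not establish $c=0$; you should either supply the missing compactness/Fredholm-type argument or simply cite \cite{BT} as the paper does.
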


\section{Statements of the main results}\label{state}
The following is our description of invariant subspaces for certain finite-rank perturbations of Toeplitz operators on $H^2(\bb D, \bb C^m).$

\begin{theorem}\label{Th3.2}
Suppose  $\phi \in H^2(\mathbb{D})$ is an inner function that vanishes at zero; $\{U_i\}_{i=1}^{k}$ and $\{V_i\}_{i=1}^{k}$ are two finite sets in $H^2(\mathbb{D},\mathbb{C}^m)$. 
Let $\cl {M}$ be a non-zero subspace of $H^2(\mathbb{D},\mathbb{C}^m)$ that is invariant under $T^*_\Phi - \sum_{i=1}^{k}V_i \otimes U_i$, where $T_\Phi:=T_\phi  \otimes I_{\mathbb{C}^m}$. Then there exists a non-negative integer $p$ and a $( T^*_\phi \otimes I_{\mathbb{C}^{p+m}} )$-invariant subspace $\mathcal{K}$ of $H^2(\mathbb{D},\mathbb{C}^{p+m})$ such that
$$
\mathcal{M}=[\mathcal{G}, I_m]\mathcal{K},
$$
where $\mathcal{G}$ is the matrix with $p$ columns that form an orthonormal basis of the subspace $span\{P_{\mathcal{M}}U_i: 1 \le i \le k\}$, and $I_m$ is the identity operator on $H^2(\mathbb{D}, \mathbb{C}^m).$ Furthermore, for each $F= \mathcal{G}R + H \in \mathcal{M}$,
$$
\|F\|^2=\|R\|^2 + \|H\|^2.
$$
\end{theorem}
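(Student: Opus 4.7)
My plan is to build, for each $F\in\mathcal{M}$, a ``Taylor-in-$\phi$'' expansion by iterating the perturbed operator, then package the coefficients and wandering-subspace remainders into an element of $H^2(\mathbb{D},\mathbb{C}^{p+m})$. Let $\mathcal{S}=\mathrm{span}\{P_{\mathcal{M}}U_i:1\le i\le k\}$, $p=\dim\mathcal{S}$, and fix an orthonormal basis $G_1,\dots,G_p$ of $\mathcal{S}\subset\mathcal{M}$. The cornerstone observation is that the rank-$k$ perturbation annihilates $V:=\mathcal{M}\ominus\mathcal{S}$: for $F\in V$, $\langle F,U_i\rangle=\langle F,P_{\mathcal{M}}U_i\rangle=0$, so $(T_\Phi^*-\sum V_i\otimes U_i)F=T_\Phi^*F$, and invariance of $\mathcal{M}$ forces $T_\Phi^*F\in\mathcal{M}$. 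For $F\in\mathcal{M}$ I set $\Lambda_0=P_{\mathcal{S}}F$, $F_1=F-\Lambda_0\in V$, and inductively $\Lambda_n=P_{\mathcal{S}}(T_\Phi^*F_n)\in\mathcal{S}$, $F_{n+1}=T_\Phi^*F_n-\Lambda_n\in V$. Writing $W_n=(I-T_\Phi T_\Phi^*)F_n\in\mathcal{W}_\phi\otimes\mathbb{C}^m$ (meaningful because $T_\Phi$ is the pure isometry of multiplication by the inner function $\phi$), the identity $F_n=W_n+\phi\Lambda_n+\phi F_{n+1}$ unrolls to
\begin{align*}
F=\Lambda_0+\sum_{n=1}^{N}\phi^{n-1}W_n+\sum_{n=1}^{N}\phi^n\Lambda_n+\phi^N F_{N+1},
\end{align*}
and isometry of $T_\Phi$ combined with orthogonality of the two summands in $T_\Phi^*F_n=\Lambda_n+F_{n+1}$ yields the telescoping norm identity $\|F\|^2=\|\Lambda_0\|^2+\sum_{n=1}^{N}(\|\Lambda_n\|^2+\|W_n\|^2)+\|F_{N+1}\|^2$.

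The crux of the argument is the convergence $\|F_{N+1}\|\to 0$, and this is the main obstacle. The key trick is that because $T_\Phi^*F_n\in\mathcal{M}=\mathcal{S}\oplus V$, the ambient orthogonal projection $P_{\mathcal{S}^\perp}$ on $H^2(\mathbb{D},\mathbb{C}^m)$ agrees with the intrinsic ``$V$-part'' of $T_\Phi^*F_n$, so $F_{n+1}=P_{\mathcal{S}^\perp}T_\Phi^*F_n$ and consequently $F_{N+1}=(P_{\mathcal{S}^\perp}T_\Phi^*)^{N}F_1$. Since $\mathcal{S}^\perp$ has finite codimension $p$ in the ambient space $H^2(\mathbb{D},\mathbb{C}^m)$, I would apply Lemma~\ref{Lem2.2} to $T=T_\Phi$: the latter is a $C_{\cdot 0}$ contraction (pure isometry, as $\phi$ is inner) with $I-T_\Phi^*T_\Phi=0$ of finite rank, so $T_\Phi P_{\mathcal{S}^\perp}$ is $C_{\cdot 0}$, i.e.\ $(P_{\mathcal{S}^\perp}T_\Phi^*)^{N}x\to 0$ in norm for every $x$. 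Taking $x=F_1$ gives $\|F_{N+1}\|\to 0$.

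With the convergence in hand, write $\Lambda_n=\sum_{i=1}^{p}c_n^iG_i$. Since $\phi$ is inner and vanishes at $0$, $\{\phi^n\}_{n\ge 0}$ is orthonormal in $H^2(\mathbb{D})$, so $r_i:=\sum_{n\ge 0}c_n^i\phi^n\in H^2(\mathbb{D})$ with $\|r_i\|^2=\sum_n|c_n^i|^2$; then $R=(r_1,\dots,r_p)\in H^2(\mathbb{D},\mathbb{C}^p)$ satisfies $\mathcal{G}R=\sum_n\phi^n\Lambda_n$ and $\|R\|^2=\sum_n\|\Lambda_n\|^2$. The series $H:=\sum_{j\ge 1}\phi^{j-1}W_j$ converges in $H^2(\mathbb{D},\mathbb{C}^m)$ by orthogonality of the Wold slices, with $\|H\|^2=\sum_j\|W_j\|^2$. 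Passing to the limit in the reconstruction formula yields $F=\mathcal{G}R+H$, and the telescoping identity becomes $\|F\|^2=\|R\|^2+\|H\|^2$. Setting $\mathcal{K}=\{(R_F,H_F):F\in\mathcal{M}\}$, which is automatically closed since $F\mapsto(R,H)$ is a linear isometry, it remains to verify invariance under $T_\phi^*\otimes I_{\mathbb{C}^{p+m}}$. From $\mathcal{G}R=\Lambda_0+\phi\,\mathcal{G}(T_\phi^*R)$ one gets $F_1=\phi\,\mathcal{G}(T_\phi^*R)+H$ and hence $T_\Phi^*F_1=\mathcal{G}(T_\phi^*R)+T_\Phi^*H$; uniqueness of the decomposition then identifies the pair associated with $T_\Phi^*F_1\in\mathcal{M}$ as $(T_\phi^*R,T_\Phi^*H)=(T_\phi^*\otimes I_{\mathbb{C}^{p+m}})(R,H)$, proving $(T_\phi^*\otimes I_{\mathbb{C}^{p+m}})\mathcal{K}\subset\mathcal{K}$.
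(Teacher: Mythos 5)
Your proposal is correct and follows essentially the same route as the paper's own proof: the same splitting $\mathcal{M}=\mathcal{S}\oplus(\mathcal{M}\ominus\mathcal{S})$, the same iteration producing the $\phi$-expansion with wandering-subspace remainders, the same appeal to Lemma \ref{Lem2.2} (via the observation that $P_{\mathcal{M}\ominus\mathcal{S}}$ agrees with the ambient projection $P_{\mathcal{S}^\perp}$ on $\mathcal{M}$) to get $\|F_{N+1}\|\to 0$, and the same uniqueness-of-coefficients argument for the $T_\phi^*\otimes I_{\mathbb{C}^{p+m}}$-invariance of $\mathcal{K}$. The only differences are cosmetic (indexing, and the paper treats the degenerate case $\mathcal{S}=\{0\}$ separately).
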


\noindent Next, we have a converse of Theorem \ref{Th3.2}. 

\begin{theorem}\label{Th3.6}
Suppose a subspace $\mathcal{M}$ of $H^2(\mathbb{D}, \mathbb{C}^m)$ has the representation 
$$
\mathcal{M}=[\mathcal{G}, I_m]\mathcal{K}, \ \ {\rm where}
$$
\begin{enumerate}
\item[(i)] $\mathcal{G}=[G_1, \dots, G_p]$ is the matrix with $\{G_i\}_{i=1}^{p}$ an orthonormal set in $\mathcal{M}$; 
\item [(ii)] $\mathcal{K}\subset \{(\tilde{R}\circ \phi, H): \tilde{R}\in H^2(\mathbb{D}, \mathbb{C}^p), \ H\in H^2(\mathbb{D}, \mathbb{C}^m)\}$ is a $(T^*_\phi \otimes I_{\mathbb{C}^{p+m}})$-invariant subspace for an inner function $\phi$ that vanishes at zero. 
\end{enumerate}
If the map 
$\begin{pmatrix}
R\\H
\end{pmatrix}
\mapsto \mathcal{G}R+H$ is a unitary from $\mathcal{K}$ onto $\mathcal{M}$, then $\mathcal{M}$ is invariant under $T^*_\Phi-\sum_{i=1}^{p}T^*_\Phi G_i \otimes G_i$, where  $T_\Phi:=T_\phi \otimes I_{\mathbb{C}^m}$.
\end{theorem}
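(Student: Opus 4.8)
The plan is to recognize the perturbed operator as a twisted backward shift and then to transport the $(T^*_\phi\otimes I_{\bb C^{p+m}})$-invariance of $\cl K$ across the unitary. Writing $P$ for the orthogonal projection of $H^2(\bb D,\bb C^m)$ onto $\mathrm{span}\{G_1,\dots,G_p\}$, the orthonormality of $\{G_i\}$ gives $Pf=\sum_{i=1}^p\langle f,G_i\rangle G_i$, so that
$$
\Big(T^*_\Phi-\sum_{i=1}^p T^*_\Phi G_i\otimes G_i\Big)f=T^*_\Phi f-\sum_{i=1}^p\langle f,G_i\rangle T^*_\Phi G_i=T^*_\Phi(I-P)f.
$$
Denote this operator by $A$, set $S:=T^*_\phi\otimes I_{\bb C^{p+m}}$, and let $W:=[\cl G,I_m]|_{\cl K}\colon\cl K\to\cl M$ be the unitary of the hypothesis. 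Since $S\cl K\subseteq\cl K$ and $W\cl K=\cl M$, it suffices to prove the intertwining $AW=WS$ on $\cl K$: for $F=W(R,H)\in\cl M$ this gives $AF=WS(R,H)\in W\cl K=\cl M$.

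The clean core is the unperturbed computation, which I would carry out first. Fix $F=\cl GR+H$ with $(R,H)\in\cl K$, so $R=\tilde R\circ\phi$ by (ii). The crux of the argument (see below) is the claim that $\langle F,G_i\rangle=R_i(0)$ for every $i$, equivalently $PF=\cl G\big(R(0)\big)$ with $R(0)=(R_1(0),\dots,R_p(0))$ read as a constant vector. Granting this, $F-PF=\cl G\big(R-R(0)\big)+H$. Because $R-R(0)=(\tilde R-\tilde R(0))\circ\phi$ vanishes at the origin, writing $\tilde R-\tilde R(0)=w\tilde S$ gives $R-R(0)=\phi\,(\tilde S\circ\phi)$. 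Setting $S(R,H)=(R',H')$, one has $R'=\tilde S\circ\phi$ and $H'=T^*_\Phi H$, whence $\cl G(R-R(0))=\phi\,\cl GR'=T_\Phi\,\cl GR'$. Applying $T^*_\Phi$ and using $T^*_\Phi T_\Phi=I$ (valid since $\phi$ is inner) yields
$$
AF=T^*_\Phi(F-PF)=\cl GR'+T^*_\Phi H=\cl GR'+H'=[\cl G,I_m]\,S(R,H)=WS(R,H),
$$
which lies in $\cl M$ because $S(R,H)\in\cl K$. This is exactly the intertwining identity, so the theorem follows once the claim is proved.

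The main obstacle is therefore the claim $\langle F,G_i\rangle=R_i(0)$, i.e. that the $W$-preimage of each $G_i$ is the constant $(E_i,0)$, where $E_i\in H^2(\bb D,\bb C^p)$ is the constant function equal to the $i$-th standard basis vector. Since $W$ is unitary, $\langle F,G_i\rangle=\langle(R,H),W^*G_i\rangle$, so the claim is equivalent to $W^*G_i=(E_i,0)$, and in turn to $(E_i,0)\in\cl K$ (note $W(E_i,0)=\cl GE_i=G_i$ whenever $(E_i,0)\in\cl K$). To prove this I would combine the two hypotheses not yet used: the norm identity $\|F\|^2=\|R\|^2+\|H\|^2$ and the $S$-invariance of $\cl K$. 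Concretely, writing $(R^{(i)},H^{(i)})=W^*G_i$ and applying the identity of the previous paragraph to $F=G_i$ exhibits
$$
WS(R^{(i)},H^{(i)})=T^*_\Phi G_i-\sum_{k=1}^p R^{(i)}_k(0)\,T^*_\Phi G_k
$$
as a forced element of $\cl M$; I expect that combining this membership with the isometry of $W$ and condition (ii) (forcing the first coordinate to be a function of $\phi$) collapses $(R^{(i)},H^{(i)})$ to $(E_i,0)$ by norm bookkeeping—much as in the rank-one case $\cl M=G_1H^2(\bb D)$ with $G_1$ inner, where the $S$-invariance of $\cl K$ together with $T^*_\Phi G_1\notin\cl M$ pins the preimage of $G_1$ to the constant $1$. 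I anticipate this normalization step—identifying the preimages of the $G_i$ with the constants—to be the genuinely delicate part, the remainder being the routine bookkeeping displayed above.
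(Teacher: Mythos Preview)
Your approach mirrors the paper's almost exactly: both factor the perturbation as $T^*_\Phi(I-P)$, write $F=\mathcal{G}R+H$, split off $\mathcal{G}R(0)$ using $R-R(0)\in\phi\,H^2(\mathbb{D},\mathbb{C}^p)$, and then invoke the $(T^*_\phi\otimes I_{\mathbb{C}^{p+m}})$-invariance of $\mathcal{K}$ to land $\mathcal{G}T^*_\Psi R+T^*_\Phi H$ back in $\mathcal{M}$. The identity you isolate as the obstacle, $\langle F,G_i\rangle=R_i(0)$ (equivalently $W^*G_i=(E_i,0)$), is precisely the step the paper passes over without argument: it writes $\langle\mathcal{G}R+H,\mathcal{G}E_i+0\rangle=\langle R,E_i\rangle$, i.e.\ it applies the unitary identity to the pair $(R,H),(E_i,0)$ as though $(E_i,0)\in\mathcal{K}$ were known.

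Your instinct that this step is delicate is correct, but your proposed route --- forcing $W^*G_i=(E_i,0)$ by norm bookkeeping and $S$-invariance --- cannot succeed as stated, because that claim is false in general under the hypotheses. Take $m=p=1$, $\phi(z)=z$, $G_1=z$, and $\mathcal{K}=\{0\}\times H^2(\mathbb{D})$. Then $\mathcal{M}=H^2(\mathbb{D})$, the map $(0,H)\mapsto H$ is the required unitary, and all hypotheses (i), (ii) hold; yet $(1,0)\notin\mathcal{K}$, so $W^*G_1=(0,z)\ne(E_1,0)$, and for $F=z$ one has $\langle F,G_1\rangle=1$ while $R(0)=0$. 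In this example your intertwining $AW=WS$ fails pointwise ($AF=0$ but $WSW^{-1}F=1$), though the conclusion $AF\in\mathcal{M}$ is of course still true. The paper's displayed computation is equally unjustified here. A clean argument must either add the hypothesis $(E_i,0)\in\mathcal{K}$ explicitly, or treat the discrepancy $PF-\mathcal{G}R(0)\in\mathcal{W}$ separately and show that $T^*_\Phi\mathcal{W}\subset\mathcal{M}$ modulo what is already controlled.
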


The structural insights provided by Theorems \ref{Th3.2} and \ref{Th3.6} extend well beyond their immediate context and serve as a foundational tool for analyzing a variety of operator-theoretic phenomena on vector-valued Hardy spaces. We apply these to deduce the following three results.

\vspace{.2 cm} 

\noindent {\bf Application I: Invariant subspaces of perturbed analytic Toeplitz operators.}  

\begin{theorem}\label{Th3.7} Let $\phi$ be an inner function in $H^2(\mathbb{D})$ that vanishes at zero, and $\mathcal{M}$ be a subspace of $H^2(\mathbb{D},\mathbb{C}^m)$. Suppose $\{U_i\}_{i=1}^{k}$ and $\{V_i\}_{i=1}^{k}$ are two finite sets in $H^2(\mathbb{D},\mathbb{C}^m)$ such that $\mathcal{M}$ is invariant under $T_\Phi-\sum_{i=1}^{k} V_i \otimes U_i$, where $T_\Phi:= T_\phi \otimes  I_{\mathbb{C}^m}.$ Then there exist a non-negative integer $p$ and a subspace $\mathcal{N}$ of $H^2(\mathbb{D},\mathbb{C}^{p+m})$ 
invariant under $T_\phi \otimes I_{\mathbb{C}^{p+m}}$ with 
$\mathcal{N}^\perp \subset\{(\tilde{R}\circ \phi, H): \tilde{R}\in H^2(\mathbb{D}, \mathbb{C}^p), H\in H^2(\mathbb{D}, \mathbb{C}^m)\}$, along with a 
unitary $\cl U: \mathcal{N}^\perp \longrightarrow \mathcal{M}^\perp$ given by 
 \begin{equation*}
      \cl U
      \begin{pmatrix}
      R\\
      H
      \end{pmatrix}=\mathcal{G}R+H,
 \end{equation*}
where $\mathcal{G}$ is the matrix with $p$ columns that form an orthonormal basis of the subspace $span\{P_{\mathcal{M}^{\perp}}V_i: 1\le i \le k\}$.  
 
Conversely, suppose $\{F_i\}_{i=1}^{p}$ is an orthonormal set in $\mathcal{M}^\perp$, and $\mathcal{N}$ is a subspace of $H^2(\mathbb{D}, \mathbb{C}^{p+m})$ invariant under $T_\phi \otimes I_{\mathbb{C}^{p+m}}$ with $\mathcal{N}^\perp \subset\{(\tilde{R}\circ \phi, H): \tilde{R}\in H^2(\mathbb{D}, \mathbb{C}^p), H\in H^2(\mathbb{D}, \mathbb{C}^m)\}$, along with a unitary 
$\cl U: \mathcal{\mathcal{N}^{\perp}}\longrightarrow \mathcal{M}^{\perp}$ defined by
\begin{equation*}
      \cl U
      \begin{pmatrix}
      R\\
      H
      \end{pmatrix}=\mathcal{G}R+H,
 \end{equation*}
where $\mathcal{G}=[F_1, \dots, F_p].$ Then
$\mathcal{M}$ is invariant under $T_\Phi-\sum_{i=1}^{p}F_i \otimes T^*_\Phi F_i$.  

\vspace{.1 cm} 

Moreover, if we assume $\{F_i\}_{i=1}^{p}\subset H^{\infty}(\mathbb{D},\mathbb{C}^m)$, then $\mathcal{M}$ can be expressed as 
$$
\mathcal{M}=\Big\{F \in H^2(\mathbb{D},\mathbb{C}^m): (T^*_{F_1}F,\dots,T^*_ {F_p}F,F) \in \mathcal{N}\Big\},    
$$
where $T_{F_i}^*$ is adjoint of the bounded operator $T_{F_i}: H^2(\mathbb{D}) \longrightarrow H^2(\mathbb{D},\mathbb{C}^m)$ that is given by $(F_if)(z)=f(z)F_i(z)$.
\end{theorem}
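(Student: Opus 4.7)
The plan is to reduce Theorem \ref{Th3.7} to Theorems \ref{Th3.2} and \ref{Th3.6} by passing to the orthogonal complement. The key observation is that $(V_i\otimes U_i)^{*}=U_i\otimes V_i$, so $\mathcal{M}$ is invariant under $T_\Phi-\sum_{i=1}^{k}V_i\otimes U_i$ if and only if $\mathcal{M}^{\perp}$ is invariant under $T_\Phi^{*}-\sum_{i=1}^{k}U_i\otimes V_i$. This places $\mathcal{M}^{\perp}$ exactly in the hypothesis of Theorem \ref{Th3.2}, with the sets $\{U_i\}$ and $\{V_i\}$ swapped. Applying that theorem yields a non-negative integer $p$ and a $(T_\phi^{*}\otimes I_{\mathbb{C}^{p+m}})$-invariant subspace $\mathcal{K}\subset H^{2}(\mathbb{D},\mathbb{C}^{p+m})$ with $\mathcal{M}^{\perp}=[\mathcal{G},I_m]\mathcal{K}$, where $\mathcal{G}$'s columns form an orthonormal basis of $\mathrm{span}\{P_{\mathcal{M}^{\perp}}V_i:1\le i\le k\}$; the norm identity $\|F\|^{2}=\|R\|^{2}+\|H\|^{2}$ tells us that the map $(R,H)\mapsto\mathcal{G}R+H$ is unitary. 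Setting $\mathcal{N}:=\mathcal{K}^{\perp}$ makes $\mathcal{N}$ invariant under $T_\phi\otimes I_{\mathbb{C}^{p+m}}$, and the unitary $\mathcal{U}$ in the statement is precisely this map.

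For the converse direction, the hypotheses on $\mathcal{N}^{\perp}$ and on $\mathcal{U}$ are exactly the assumptions (i) and (ii) of Theorem \ref{Th3.6} (with $\mathcal{M}^{\perp}$ and $F_i$ playing the roles of $\mathcal{M}$ and $G_i$). Applying Theorem \ref{Th3.6} to $\mathcal{M}^{\perp}$ with $\mathcal{K}=\mathcal{N}^{\perp}$ yields that $\mathcal{M}^{\perp}$ is invariant under $T_\Phi^{*}-\sum_{i=1}^{p}T_\Phi^{*}F_i\otimes F_i$. Taking adjoints and using $(T_\Phi^{*}F_i\otimes F_i)^{*}=F_i\otimes T_\Phi^{*}F_i$, I conclude that $\mathcal{M}$ is invariant under $T_\Phi-\sum_{i=1}^{p}F_i\otimes T_\Phi^{*}F_i$, as required.

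For the functional description under the additional assumption $\{F_i\}\subset H^{\infty}(\mathbb{D},\mathbb{C}^m)$, observe that $F\in\mathcal{M}$ if and only if $F\perp\mathcal{M}^{\perp}=\mathcal{U}(\mathcal{N}^{\perp})$. For each $(R,H)\in\mathcal{N}^{\perp}$, the boundedness of $T_{F_i}:H^{2}(\mathbb{D})\to H^{2}(\mathbb{D},\mathbb{C}^m)$ (which is precisely what $F_i\in H^{\infty}$ guarantees) allows me to compute
\[
\langle F,\mathcal{G}R+H\rangle=\sum_{i=1}^{p}\langle F,T_{F_i}R_i\rangle+\langle F,H\rangle=\sum_{i=1}^{p}\langle T_{F_i}^{*}F,R_i\rangle+\langle F,H\rangle=\Big\langle(T_{F_1}^{*}F,\ldots,T_{F_p}^{*}F,F),(R,H)\Big\rangle.
\]
Hence $F\perp\mathcal{M}^{\perp}$ is equivalent to the tuple $(T_{F_1}^{*}F,\ldots,T_{F_p}^{*}F,F)$ being orthogonal to $\mathcal{N}^{\perp}$, i.e., lying in $\mathcal{N}$, giving exactly the claimed description.

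The most delicate step I anticipate is verifying, in the forward direction, that the subspace $\mathcal{N}^{\perp}=\mathcal{K}$ produced by Theorem \ref{Th3.2} automatically satisfies $\mathcal{N}^{\perp}\subset\{(\tilde R\circ\phi,H):\tilde R\in H^{2}(\mathbb{D},\mathbb{C}^p),\,H\in H^{2}(\mathbb{D},\mathbb{C}^m)\}$, since this inclusion is not recorded in the statement of Theorem \ref{Th3.2} but is required to land in the hypothesis of Theorem \ref{Th3.6} for the converse. I expect this to follow from inspecting how $\mathcal{K}$ is constructed in the proof of Theorem \ref{Th3.2}: the columns of $\mathcal{G}$ already exhaust the obstruction in the first $p$ coordinates, so elements of $\mathcal{K}$ are forced to have their first $p$ coordinates in the range of composition with $\phi$ (equivalently, orthogonal to the wandering subspace of $T_\phi$), using that $\phi$ is inner and vanishes at zero.
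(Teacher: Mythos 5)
Your proposal is correct and follows essentially the same route as the paper: pass to $\mathcal{M}^{\perp}$ via the adjoint identity $(V_i\otimes U_i)^{*}=U_i\otimes V_i$, apply Theorem \ref{Th3.2} and set $\mathcal{N}=\mathcal{K}^{\perp}$ for the forward direction, invoke Theorem \ref{Th3.6} for the converse, and derive the functional description by the same inner-product computation. The "delicate step" you flag — that $\mathcal{K}\subset\{(\tilde R\circ\phi,H)\}$ — is handled in the paper exactly as you anticipate, by recalling that in the construction of $\mathcal{K}$ in the proof of Theorem \ref{Th3.2} the first component is always of the form $\sum_n A_n\phi^n$.
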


\noindent{\bf Application II: Almost invariant subspaces.} 

\begin{definition}\label{Def3.6} A subspace $\mathcal{M}$ of a Hilbert space $\mathcal{H}$ is said to be almost invariant under a bounded operator $T$ on $\mathcal{H}$ if there exists a finite-dimensional subspace $\mathcal{F}$ of $\mathcal{H}$ (orthogonal to $\cl M$) such that
\begin{eqnarray} \label{almc}
T{\mathcal{M}} \subseteq \mathcal{M} \oplus \mathcal{F}.
\end{eqnarray}    
\end{definition}

A finite-dimensional subspace of the smallest dimension that satisfies (\ref{almc}) is called the \textit{defect space} of $\mathcal{M}$, and its dimension is called the \textit{defect} of 
$\mathcal{M}$.

\begin{theorem}\label{Th3.10}
Suppose $\phi \in H^2(\mathbb{D})$ is an inner function that vanishes at zero and $T_\Phi:=T_\phi \otimes  I_{\mathbb{C}^m}$. Let $\mathcal{M}$ be an almost $T^*_\Phi$-invariant subspace of $H^2(\mathbb{D},\mathbb{C}^m)$ with defect $n$, and $\{F_i: i=1,2,...,n\}$ be an orthonormal basis of the defect space. Then there exists a non-negative integer $p$ and a $(T^*_\phi \otimes I_{\mathbb{C}^{p+m}})$-invariant subspace $\mathcal{K}$ of $H^2(\mathbb{D},\mathbb{C}^{p+m})$ such that
\begin{eqnarray*}
\mathcal{M}=[\mathcal{G}, I_m]\mathcal{K},
\end{eqnarray*}
where $\mathcal{G}$ is the matrix with $p$ columns that form an orthonormal basis of the subspace 
$span\{P_\mathcal{M} T_\Phi F_i: i=1,2,\dots,n\}$, and $I_m$ is the identity operator on $H^2(\bb D, \bb C^m)$. Furthermore, for each $F = \mathcal{G}R + H \in \mathcal{M}$,
\begin{eqnarray*}
    \|F\|^2=\|R\|^2 + \|H\|^2, \text{ where } (R,H)\in \mathcal{K}.
\end{eqnarray*}

Conversely, suppose a subspace of $\mathcal{M}$ of $H^2(\mathbb{D}, \mathbb{C}^m)$ has the representation $\mathcal{M}=[\mathcal{G},I_m]\mathcal{K}$, where $\mathcal{G}$ is the matrix with 
columns $\{F_1, \dots, F_p\}$, an orthonormal set in $\mathcal{M}$, and 
$\mathcal{K}\subset \{(\tilde{R}\circ \phi, H): \tilde{R}\in H^2(\mathbb{D}, \mathbb{C}^p), \ H\in H^2(\mathbb{D}, \mathbb{C}^m)\}$ is a $(T^*_\phi \otimes I_{\mathbb{C}^{p+m}})$-invariant subspace, along with a unitary from $\mathcal{K}$ onto $\mathcal{M}$ given by 
$
\begin{pmatrix}
R\\H
\end{pmatrix}
\mapsto \mathcal{G}R+H$. Then $\mathcal{M}$ is almost $T^*_\Phi$-invariant with defect at most $p$.  
\end{theorem}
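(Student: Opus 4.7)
The plan is to reduce both directions of Theorem \ref{Th3.10} to the classification results Theorem \ref{Th3.2} and Theorem \ref{Th3.6}, exploiting the fact that almost $T^*_\Phi$-invariance of a subspace is equivalent to ordinary invariance under a specific finite-rank perturbation of $T^*_\Phi$ built from the defect basis.

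For the forward direction, I would first unpack the definition of almost invariance. Since the defect space $\mathrm{span}\{F_1,\dots,F_n\}$ is orthogonal to $\cl M$ and contains $(I-P_{\cl M})T^*_\Phi f$ for every $f\in\cl M$, one can write
\begin{equation*}
P_{\cl M} T^*_\Phi f = T^*_\Phi f - \sum_{i=1}^{n}\langle T^*_\Phi f,F_i\rangle F_i = T^*_\Phi f - \sum_{i=1}^{n}\langle f,T_\Phi F_i\rangle F_i,
\end{equation*}
which says precisely that $\cl M$ is invariant under the finite-rank perturbation $T^*_\Phi - \sum_{i=1}^{n} F_i\otimes T_\Phi F_i$. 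This lands exactly inside the hypothesis of Theorem \ref{Th3.2} with $V_i:=F_i$ and $U_i:=T_\Phi F_i$; applying that theorem produces the integer $p$, the $(T^*_\phi\otimes I_{\bb C^{p+m}})$-invariant subspace $\cl K$, and the matrix $\cl G$ whose columns form an orthonormal basis of $\mathrm{span}\{P_{\cl M}T_\Phi F_i\}$, along with the norm identity for $F=\cl GR+H$.

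For the converse, I would apply Theorem \ref{Th3.6} with $G_i:=F_i$ to conclude that $\cl M$ is invariant under $T^*_\Phi-\sum_{i=1}^{p}T^*_\Phi F_i\otimes F_i$. Then for every $f\in\cl M$,
\begin{equation*}
T^*_\Phi f = \Bigl(T^*_\Phi - \sum_{i=1}^{p}T^*_\Phi F_i\otimes F_i\Bigr)f + \sum_{i=1}^{p}\langle f,F_i\rangle T^*_\Phi F_i \in \cl M + \mathrm{span}\{T^*_\Phi F_i: 1\le i\le p\}.
\end{equation*}
Taking $\cl F$ to be the orthogonal projection of $\mathrm{span}\{T^*_\Phi F_i\}$ into $\cl M^\perp$, which is at most $p$-dimensional, yields $T^*_\Phi\cl M\subseteq \cl M\oplus\cl F$, showing that $\cl M$ is almost $T^*_\Phi$-invariant with defect at most $p$.

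The main (rather modest) obstacle is keeping the rank-one bookkeeping straight: checking that the convention $V\otimes U\colon x\mapsto\langle x,U\rangle V$ lines up correctly so that the defect vectors $F_i$ enter in the $V$-slot for the forward direction (paired with $U_i=T_\Phi F_i$) but in the $U$-slot for the converse (paired with $V_i=T^*_\Phi F_i$). Once that identification is in place, Theorem \ref{Th3.10} follows as an essentially immediate corollary of Theorems \ref{Th3.2} and \ref{Th3.6}.
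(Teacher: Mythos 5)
Your proposal is correct and follows essentially the same route as the paper: reduce almost $T^*_\Phi$-invariance to invariance under the finite-rank perturbations $T^*_\Phi-\sum F_i\otimes T_\Phi F_i$ (resp.\ $T^*_\Phi-\sum T^*_\Phi F_i\otimes F_i$) and then invoke Theorems \ref{Th3.2} and \ref{Th3.6}. The only difference is that where you verify the two perturbation equivalences by direct computation, the paper simply cites Lemma \ref{Lem3.6} (quoted from Das--Sarkar), whose content is exactly those two computations.
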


\noindent {\bf Application III: Nearly invariant subspaces.}

\begin{definition} Suppose $\phi$ and $\psi$ are two inner functions in $H^2(\mathbb{D})$. Then a non-zero subspace $\mathcal{M}\subset H^2(\mathbb{D},\mathbb{C}^m)$ is called nearly $T^*_{\phi, \psi}$-invariant if   
 \begin{eqnarray*}
  T^*_\Phi(\mathcal{M}\cap T_\Psi H^2(\mathbb{D},\mathbb{C}^m))\subset \mathcal{M},
\end{eqnarray*}
where $T_\Phi:=T_{\phi} \otimes I_{\mathbb{C}^m}$, $T_{\Psi}:=T_{\psi} \otimes I_{\mathbb{C}^m}$.
\end{definition}

For $\phi=\psi=z$ and $m=1$, a nearly $T_{\phi,\psi}^*$-invariant subspace is simply a nearly invariant subspace under the backward shift $T_z^*.$

\begin{theorem}\label{Th3.13}
Suppose $B$ and $B^{'}$ are two finite Blaschke products such that $B$ vanishes at zero, and $B$ divides $B^{'}$. Then a non-zero subspace $\mathcal{M}\subset H^2(\mathbb{D},\mathbb{C}^m)$ is nearly $T^*_{B,B^{'}}$-invariant if and only if there exists a $(T^*_B \otimes I_{\mathbb{C}^p})$-invariant subspace $\mathcal{N}$ of $H^2(\mathbb{D},\mathbb{C}^p)$ such that
\begin{eqnarray*}
    \mathcal{M}=\mathcal{G}\mathcal{N},
\end{eqnarray*}
where $\mathcal{G}$ is the matrix with $p$ columns that form an orthonormal basis of $\mathcal{M}\ominus (\mathcal{M}\cap T_\mathcal{B^{'}} H^2(\mathbb{D},\mathbb{C}^m))$ with  $T_{\mathcal{B}^{'}}:=T_{B^{'}} \otimes I_{\mathbb{C}^m}$, along with a unitary $\cl U: \mathcal{N} \longrightarrow \mathcal{M}$ defined by
\begin{eqnarray*}
  \cl U (R)=\mathcal{G}R.
\end{eqnarray*}
\end{theorem}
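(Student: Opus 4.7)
The proof has two implications, and both run through the finite-rank perturbation framework of Theorems \ref{Th3.2} and \ref{Th3.6}, applied to the operator $T_B^*\otimes I_{\mathbb{C}^m}-\sum_{i=1}^{p}(T_B^*F_i)\otimes F_i$, where $\{F_i\}_{i=1}^{p}$ is an orthonormal basis of the wandering space $\mathcal{W}:=\mathcal{M}\ominus(\mathcal{M}\cap T_{\mathcal{B}'}H^2(\mathbb{D},\mathbb{C}^m))$. First I would verify that $\mathcal{W}$ is finite-dimensional: since $\mathcal{W}\cap T_{\mathcal{B}'}H^2(\mathbb{D},\mathbb{C}^m)=\{0\}$ (by the orthogonal decomposition $\mathcal{M}=\mathcal{W}\oplus(\mathcal{M}\cap T_{\mathcal{B}'}H^2(\mathbb{D},\mathbb{C}^m))$), $\mathcal{W}$ embeds into the finite-dimensional quotient $H^2(\mathbb{D},\mathbb{C}^m)/T_{\mathcal{B}'}H^2(\mathbb{D},\mathbb{C}^m)$, giving $p:=\dim\mathcal{W}\le m\deg B'$.

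For the ``only if'' direction, near-invariance says $T_B^*(F-P_\mathcal{W}F)\in\mathcal{M}$ for every $F\in\mathcal{M}$, because $F-P_\mathcal{W}F\in\mathcal{M}\cap T_{\mathcal{B}'}H^2(\mathbb{D},\mathbb{C}^m)\subset T_BH^2(\mathbb{D},\mathbb{C}^m)$. Setting $F^{(0)}:=F$ and $F^{(k+1)}:=T_B^*(F^{(k)}-P_\mathcal{W}F^{(k)})\in\mathcal{M}$, a Pythagorean computation gives $F=\sum_{k<n}B^kP_\mathcal{W}F^{(k)}+B^nF^{(n)}$ with $\|F\|^2=\sum_{k<n}\|P_\mathcal{W}F^{(k)}\|^2+\|F^{(n)}\|^2$. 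To force the tail to zero I would invoke Lemma \ref{Lem2.2} with $T:=T_B^*\otimes I_{\mathbb{C}^m}$ (a $C._0$ contraction since $B$ is inner with $B(0)=0$, and $I-T^*T$ has finite rank $m\deg B$) and the finite-codimension subspace $\mathcal{W}^\perp$: the operator $TP_{\mathcal{W}^\perp}$ is $C._0$, and a straightforward induction identifies $(TP_{\mathcal{W}^\perp})^nF$ with $F^{(n)}$ on $\mathcal{M}$, so $\|F^{(n)}\|\to 0$. Therefore $F=\sum_{k\ge 0}B^kP_\mathcal{W}F^{(k)}$, and expanding $P_\mathcal{W}F^{(k)}=\sum_{i=1}^{p}c_i^{(k)}F_i$ yields $F=\sum_{i=1}^{p}F_i(\tilde R_i\circ B)$ with $\tilde R_i(w):=\sum_k c_i^{(k)}w^k\in H^2(\mathbb{D})$. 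Since $B$ is inner with $B(0)=0$, the powers $\{B^k\}_{k\ge 0}$ are orthonormal in $H^2(\mathbb{D})$, so $F\mapsto R:=(\tilde R_i\circ B)_i$ is an isometric identification of $\mathcal{M}$ with a $(T_B^*\otimes I_{\mathbb{C}^p})$-invariant subspace $\mathcal{N}$ of $H^2(\mathbb{D},\mathbb{C}^p)$; the shift of coefficients corresponds exactly to the iteration step $F\mapsto F^{(1)}$.

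For the ``if'' direction, take $F\in\mathcal{M}\cap T_{\mathcal{B}'}H^2(\mathbb{D},\mathbb{C}^m)$ and set $R:=\mathcal{U}^{-1}F\in\mathcal{N}$. Since $G_i=\mathcal{G}(E_i)$ for $E_i\in\mathcal{N}$ a standard basis vector viewed as a constant function, unitarity gives $\langle F,G_i\rangle=\langle R,E_i\rangle=R_i(0)$, and the orthogonality $F\perp\mathcal{W}$ forces $R_i(0)=0$ for each $i$. I would next show $\mathcal{N}\subset\{(\tilde R\circ B):\tilde R\in H^2(\mathbb{D},\mathbb{C}^p)\}$: iterating $T_B^*\otimes I_{\mathbb{C}^p}$ on $\mathcal{N}$ (which is $C._0$ on $H^2(\mathbb{D},\mathbb{C}^p)$) produces a wandering-subspace expansion $R=\sum_{j\ge 0}B^ja^{(j)}$ with $a^{(j)}\in(T_BH^2(\mathbb{D},\mathbb{C}^p))^{\perp}$, after which the unitary norm identity together with the rigid prescription of $\{G_i\}$ as an orthonormal basis of $\mathcal{W}$ forces each $a^{(j)}$ to be a constant element of $\mathbb{C}^p$. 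With this reduction in hand, $\mathcal{K}:=\{(R,0):R\in\mathcal{N}\}$ satisfies the hypotheses of Theorem \ref{Th3.6} with $\phi=B$, so $\mathcal{M}$ is invariant under $T_B^*\otimes I_{\mathbb{C}^m}-\sum_{i=1}^{p}(T_B^*G_i)\otimes G_i$; applying this operator to $F$ and invoking $\langle F,G_i\rangle=0$ yields $T_B^*F\in\mathcal{M}$. The principal obstacle is precisely this reduction $\mathcal{N}\subset\{\tilde R\circ B\}$: $(T_B^*\otimes I_{\mathbb{C}^p})$-invariance alone admits $\mathcal{N}$ with nontrivial wandering pieces, and the requisite rigidity must be extracted by comparing $\|R\|^2=\sum_j\|a^{(j)}\|^2$ against $\|\mathcal{G}R\|^2=\|\sum_j B^j\mathcal{G}(a^{(j)})\|^2$, exploiting crucially that $\{G_i\}$ spans precisely $\mathcal{W}$ and not an arbitrary orthonormal set in $\mathcal{M}$.
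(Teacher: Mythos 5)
Your forward implication is correct and is, in substance, the paper's own argument. The paper first notes (via Lemma \ref{Lem3.9}) that $\mathcal{W}$ is finite dimensional, rewrites near $T^*_{B,B'}$-invariance as invariance of $\mathcal{M}$ under the finite-rank perturbation $T^*_{\mathcal{B}}-\sum_{i=1}^{p}T^*_{\mathcal{B}}G_i\otimes G_i$, quotes Theorem \ref{Th3.2}, and then kills the $H$-component using condition C2, since each $H_n$ lies in $\mathcal{M}\cap T_{\mathcal{B}'}H^2(\mathbb{D},\mathbb{C}^m)\subset T_{\mathcal{B}}H^2(\mathbb{D},\mathbb{C}^m)$. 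You instead unroll the same iteration directly; this is fine, and even slightly cleaner, because $F-P_{\mathcal{W}}F\in BH^2(\mathbb{D},\mathbb{C}^m)$ leaves no $\mathcal{K}_{\mathcal{B}}$-remainder at any stage. One bookkeeping slip: with the paper's convention ($T$ is $C._0$ when $\|T^{*n}h\|\to0$), the operator $T_B^*\otimes I_{\mathbb{C}^m}$ is \emph{not} a $C._0$ contraction (its adjoint is an isometry). Lemma \ref{Lem2.2} must be applied to $T_{\mathcal{B}}=T_B\otimes I_{\mathbb{C}^m}$ and the finite-codimension subspace $\mathcal{W}^{\perp}$, after which $(T_{\mathcal{B}}^*P_{\mathcal{W}^{\perp}})^{n}=T_{\mathcal{B}}^*\big(P_{\mathcal{W}^{\perp}}T_{\mathcal{B}}^*\big)^{n-1}P_{\mathcal{W}^{\perp}}\to 0$ strongly, exactly as in the proof of Theorem \ref{Th3.2}.

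The converse is where the genuine gap lies. You correctly identify that everything hinges on showing $\mathcal{N}\subseteq\{\tilde R\circ B:\tilde R\in H^2(\mathbb{D},\mathbb{C}^p)\}$, but the rigidity argument you sketch (unitarity plus the requirement that the columns of $\mathcal{G}$ span exactly $\mathcal{W}$ forcing the wandering components $a^{(j)}$ to be constants) cannot be completed, because that containment does not follow from the stated hypotheses. Concretely, take $m=1$, $B=B'=z^2$, $G_1=(z+z^3)/\sqrt2$, $\mathcal{N}=\mathrm{span}\{1,z\}$, and $\mathcal{M}=\mathcal{G}\mathcal{N}=\mathrm{span}\{z+z^3,\ z^2+z^4\}$. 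Then $\mathcal{N}$ is $T_{z^2}^*$-invariant, $R\mapsto G_1R$ is a unitary onto $\mathcal{M}$, and $\mathcal{M}\ominus(\mathcal{M}\cap z^2H^2(\mathbb{D}))=\mathbb{C}G_1$, so every hypothesis of the converse holds; yet $z^2+z^4\in\mathcal{M}\cap z^2H^2(\mathbb{D})$ while $T_{z^2}^*(z^2+z^4)=1+z^2\notin\mathcal{M}$. The culprit is $z\in\mathcal{N}$, which is not of the form $\tilde R\circ z^2$. So the step you flag as ``the principal obstacle'' is not merely hard -- it fails, and the containment $\mathcal{N}\subseteq\{\tilde R\circ B\}$ must be imposed as an additional hypothesis (as it is, explicitly, in the converse statements of Theorems \ref{Th3.6} and \ref{Th3.10}); note that the paper's own proof of the converse also uses it tacitly at the point where it writes $R=\sum_{n\ge 1}A_nB^n$. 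Once that containment is in hand, your route through Theorem \ref{Th3.6} with $\mathcal{K}=\mathcal{N}\times\{0\}$ (or the paper's direct computation of $T_{\mathcal{B}}^*F$) does go through.
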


\noindent Lastly, we study the notion of nearly $T^*_{\phi, \psi}$-invariant subspaces of $H^2(\bb D, \bb C^m)$ with finite defect $n$. 

\begin{definition}\label{Def4.1} Let $\phi$ and $\psi$ be two inner functions in $H^2(\mathbb{D})$. Then a non-zero subspace $\mathcal{M}$ of $H^2(\mathbb{D},\mathbb{C}^m)$ is said to be a nearly $T^*_{\phi, \psi}$-invariant with finite defect $n$ if there exists an $n$-dimensional subspace $\mathcal{F}$ (orthogonal to $\mathcal{M}$) such that 
\begin{equation}\label{innerdefect}
T^*_{\Phi}\Big(\mathcal{M}\cap T_{\Psi} H^2(\mathbb{D},\mathbb{C}^m)\Big) \subseteq \mathcal{M} \oplus \mathcal{F},
\end{equation} 
where $T_{\Phi}:= T_{\phi}\otimes I_{\mathbb{C}^m}$ and $T_{\Psi}:= T_{\psi}\otimes I_{\mathbb{C}^m}$.
\end{definition}

Similar to the almost invariant case, here also we have a notion of the defect and the defect space. A finite dimensional subspace of the smallest dimension that satisfies (\ref{innerdefect}) is called the \textit{defect space} of $\mathcal{M}$, and its dimension is called the \textit{defect} of $\mathcal{M}$. We shall not make any distinction between the terminologies, as the difference will always be apparent from the context. 

Note that the notion of a nearly $T^*_{\phi, \psi}$-invariant subspace is a particular case of a nearly $T^*_{\phi, \psi}$-invariant subspace with finite defect. In the following result, given two finite Blaschke products $B$ and $B^{'}$, we describe nearly $T^*_{B, B^{'}}$-invariant subspace of $H^2(\bb D, \bb C^m)$ with defect $1$. For notational clarity and convenience, here we are restraining ourselves to the simplest case of defect one to highlight the key features of the result rather than the tedious details; the results for the general case are similar and we give them in Section \ref{Sec4}.  

\begin{theorem}\label{Th4.2}
Suppose $B$ and $B^{'}$ are two finite Blaschke products such that $B$ vanishes at zero, and $B$ divides $B^{'}$ . Let $\mathcal{M}$ be a non-zero nearly $T^*_{B,B^{'}}$-invariant subspace of $H^2(\mathbb{D},\mathbb{C}^m)$ with defect $1$, and let $J$ be a function of unit norm in the defect space. 
\begin{enumerate}
\item[(i)] If $\mathcal{M}\not\subset T_{\mathcal{B^{'}}} H^2(\mathbb{D},\mathbb{C}^m)$, then there exists a $T^*_B\otimes I_{\mathbb{C}^{p+1}}$-invariant subspace $\mathcal{K}$ of $H^2(\mathbb{D},\mathbb{C}^{p+1})$ such that
\begin{eqnarray*}
    \mathcal{M}=\Big\{F\in H^2(\bb D, \bb C^m): F=\mathcal{G}R + BhJ \ \ {\rm with} \ \  (R,h)\in \mathcal{K}\Big\},
\end{eqnarray*}
where $\mathcal{G}$ is the matrix with $p$ columns that form an orthonormal basis of $\mathcal{M}\ominus (\mathcal{M}\cap T_{\mathcal{B^{'}}} H^2(\mathbb{D},\mathbb{C}^m))$. Further, for 
$F=\mathcal{G}R+ BhJ\in \mathcal{M}$,
\begin{eqnarray*}
\|F\|^2=\|R\|^2 + \|h\|^2.
\end{eqnarray*}

\item[(ii)] If $\mathcal{M}\subset T_{\mathcal{B^{'}}} H^2(\mathbb{D},\mathbb{C}^m)$, then there exists a 
$T^*_B$-invariant subspace $\mathcal{K}$ of $H^2(\mathbb{D})$ such that
\begin{eqnarray*}
    \mathcal{M}=\Big\{F\in H^2(\bb D, \bb C^m): F=BhJ \ \ {\rm with} \ \ h\in \mathcal{K} \Big\}.
\end{eqnarray*}
Further, for $F=BhJ\in \mathcal{M}$,
\begin{eqnarray*}
\|F\|=\|h\|.
\end{eqnarray*}
\end{enumerate}
\end{theorem}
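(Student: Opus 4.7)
The plan is to adapt the iterative decomposition underlying Theorem \ref{Th3.2} (also the backbone of Theorem \ref{Th3.13}), threading the one-dimensional defect direction $J$ as an additional scalar coordinate. Consider Case (i). Set $\mathcal{W}:=\mathcal{M}\ominus(\mathcal{M}\cap T_{\mathcal{B}'}H^2(\mathbb{D},\mathbb{C}^m))$: it is nonzero, and finite dimensional since $B'$ is a finite Blaschke product (so $T_{\mathcal{B}'}H^2$ has finite codimension in $H^2(\mathbb{D},\mathbb{C}^m)$); fix an orthonormal basis $G_1,\dots,G_p$ of $\mathcal{W}$. For $F\in\mathcal{M}$, decompose $F=P_{\mathcal{W}}F+F_2$. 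Because $B\mid B'$, $F_2\in\mathcal{M}\cap T_{\mathcal{B}'}H^2\subset T_{\Phi}H^2$, so $F_2=Bh_F$ where $h_F:=T^*_{\Phi}F_2$. The nearly $T^*_{B,B'}$-invariance with defect space $\mathbb{C}J$ gives $h_F=LF+\alpha_F J$ with $LF\in\mathcal{M}$ and $\alpha_F\in\mathbb{C}$. Iterating $L$ (with $F^{(0)}:=F$, $F^{(k+1)}:=LF^{(k)}$, $\alpha_k:=\alpha_{F^{(k)}}$, $c_i^{(k)}:=\langle F^{(k)},G_i\rangle$) unfolds to
\[
F=\sum_{k=0}^{n-1}B^k\mathcal{G}c^{(k)}+B\Bigl(\sum_{k=0}^{n-1}\alpha_kB^k\Bigr)J+B^nF^{(n)},
\]
and, using $J\perp\mathcal{M}$ and $\|J\|=1$, telescopes to the energy identity $\|F\|^2=\sum_{k=0}^{n-1}(\|P_{\mathcal{W}}F^{(k)}\|^2+|\alpha_k|^2)+\|F^{(n)}\|^2$. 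Since $B(0)=0$, $\{B^k\}_{k\ge 0}$ is orthonormal in $H^2(\mathbb{D})$, so $R_i:=\sum_k c_i^{(k)}B^k$ and $h:=\sum_k\alpha_kB^k$ converge in $H^2(\mathbb{D})$.

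The main obstacle is to show $\|F^{(n)}\|\to 0$: granted this, the partial-sum identity gives $F=\mathcal{G}R+BhJ$ in $H^2(\mathbb{D},\mathbb{C}^m)$, and the energy identity specializes to $\|F\|^2=\|R\|^2+\|h\|^2$. I would combine two ingredients. First, $B^nF^{(n)}\in B^nH^2(\mathbb{D},\mathbb{C}^m)$ and $\bigcap_n B^nH^2(\mathbb{D},\mathbb{C}^m)=\{0\}$ force $B^nF^{(n)}\rightharpoonup 0$ weakly. Second, I would apply Lemma \ref{Lem2.2} (Benhida--Timotin) with $T=T_{\Phi}=T_B\otimes I_{\mathbb{C}^m}$, which is a $C_{.0}$ contraction (indeed a pure isometry, so $I-T^*T=0$ has rank zero), and the finite-codimension subspace $\mathcal{W}^\perp$ of $H^2(\mathbb{D},\mathbb{C}^m)$: this yields that $T_{\Phi}P_{\mathcal{W}^\perp}$ is $C_{.0}$, equivalently $\|(P_{\mathcal{W}^\perp}T^*_{\Phi})^nh\|\to 0$ for every $h$. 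Since the iteration admits the compact expression $F^{(n)}=(P_{\mathcal{M}}T^*_{\Phi}P_{\mathcal{W}^\perp})^nF$, combining Benhida--Timotin decay with the containment $LF\in\mathcal{M}$ and the weak convergence of $B^nF^{(n)}$ should force the norm decay $\|F^{(n)}\|\to 0$. I expect this step to be the technical crux: the Benhida--Timotin decay places the projection $P_{\mathcal{W}^\perp}$ on the left, while the iteration naturally places $P_{\mathcal{M}}$ on the left, and the translation between these may require an auxiliary Cauchy argument on the partial sums $S_n:=F-B^nF^{(n)}$.

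Once $\|F^{(n)}\|\to 0$ is in hand, define $\mathcal{K}:=\{(R_F,h_F):F\in\mathcal{M}\}\subset H^2(\mathbb{D},\mathbb{C}^{p+1})$. Applying $L$ to $F$ shifts the coefficient sequences $(c^{(k)})$ and $(\alpha_k)$ by one index, and this shift, expressed in the orthonormal family $\{B^k\}$, is exactly the action of $T^*_B\otimes I_{\mathbb{C}^{p+1}}$ on $(R,h)$; hence $\mathcal{K}$ is $(T^*_B\otimes I_{\mathbb{C}^{p+1}})$-invariant, and the map $(R,h)\mapsto\mathcal{G}R+BhJ$ is a unitary from $\mathcal{K}$ onto $\mathcal{M}$ (closedness of $\mathcal{K}$ is inherited from this isometry). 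For Case (ii), where $\mathcal{M}\subset T_{\mathcal{B}'}H^2(\mathbb{D},\mathbb{C}^m)$, the subspace $\mathcal{W}$ vanishes, so $p=0$ and the $\mathcal{G}$ term disappears; the same iteration directly yields $F=BhJ$ with $h\in\mathcal{K}$, a $T^*_B$-invariant subspace of $H^2(\mathbb{D})$, and $\|F\|=\|h\|$.
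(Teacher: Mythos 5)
Your proposal tracks the paper's own argument almost step for step (same space $\mathcal{W}$, same iterative decomposition $F^{(k)}=\mathcal{G}c^{(k)}+B F^{(k+1)}+B\alpha_k J$, same energy identity, same appeal to Benhida--Timotin), but it has a genuine gap at exactly the point you flag as the ``technical crux,'' and the workaround you sketch does not close it. The difficulty is that your iteration formula $F^{(n)}=(P_{\mathcal{M}}T^*_{\Phi}P_{\mathcal{W}^\perp})^nF$ contains $P_{\mathcal{M}}$, and $\mathcal{M}$ may have infinite codimension, so Lemma \ref{Lem2.2} does not apply to it. Your proposed substitute --- weak convergence of $B^nF^{(n)}$ to $0$ plus a Cauchy argument on $S_n=F-B^nF^{(n)}$ --- fails because the Cauchy property is not available: the energy identity only gives that $\|F^{(n)}\|^2$ decreases to some limit $\ell\ge 0$, and to upgrade this to $\|S_{n+m}-S_n\|^2=\|F^{(n)}\|^2-\|F^{(n+m)}\|^2$ one needs $\langle F^{(n)},B^mF^{(n+m)}\rangle=\|F^{(n+m)}\|^2$, which requires cross terms such as $\langle \mathcal{G}c^{(k)},B^jJ\rangle$ and $\langle J,B^jF^{(n+m)}\rangle$ to vanish; they do not, since the vectors $B^kJ$ are neither mutually orthogonal nor orthogonal to $\mathcal{W}$ (only the blockwise orthogonality $\mathcal{G}c^{(k)}\perp B(F^{(k+1)}+\alpha_kJ)$ and $BF^{(k+1)}\perp B\alpha_kJ$ holds). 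Without norm convergence of $B^nF^{(n)}$, weak convergence to $0$ is consistent with $\ell>0$ (think of a scaled orthonormal sequence), so nothing forces $\|F^{(n)}\|\to 0$.

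The missing idea is a one-line observation: by near invariance with defect space $\mathcal{F}=\mathbb{C}J$, the vector $T^*_{\mathcal{B}}P_{\mathcal{W}^\perp}F^{(k)}$ already lies in $\mathcal{M}\oplus\mathcal{F}$, and on $\mathcal{M}\oplus\mathcal{F}$ the projection $P_{\mathcal{M}}$ agrees with $P_{\mathcal{F}^\perp}$. Hence $F^{(n)}=(P_1T^*_{\mathcal{B}}P_2)^nF$ with $P_1=P_{\mathcal{F}^\perp}$ and $P_2=P_{\mathcal{W}^\perp}$, both of \emph{finite} codimension; since $\mathcal{F}\perp\mathcal{W}$, $P_1P_2=P_{(\mathcal{F}\oplus\mathcal{W})^\perp}$ is a single finite-codimension projection, Lemma \ref{Lem2.2} makes $T_{\mathcal{B}}P_1P_2$ a $C_{\cdot 0}$ contraction, and writing $F^{(n)}=P_1T^*_{\mathcal{B}}\bigl((T_{\mathcal{B}}P_1P_2)^*\bigr)^{n-1}P_2F$ gives $\|F^{(n)}\|\to 0$ directly. (This is precisely how the paper proceeds; it then identifies the limit $F=\mathcal{G}R+BhJ$ pointwise rather than via norm convergence of the partial sums, sidestepping the unboundedness of multiplication by $\mathcal{G}$ and $J$.) The remainder of your outline --- the definition and $T^*_B\otimes I_{\mathbb{C}^{p+1}}$-invariance of $\mathcal{K}$, the unitarity of $(R,h)\mapsto\mathcal{G}R+BhJ$, and the degenerate case (ii) with $\mathcal{W}=\{0\}$ --- matches the paper and is fine once the decay of $\|F^{(n)}\|$ is secured.
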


\noindent The following is a converse of Theorem \ref{Th4.2}. 

\begin{theorem}\label{Th4.4} 
 Suppose $B$ and $B^{'}$ are two finite Blaschke products such that $B$ divides $B^{'}$, and $B$ vanishes at zero. Let $\mathcal{M}\subset H^2(\mathbb{D},\mathbb{C}^m)$ has either of  the following two representations
\begin{enumerate}
\item[(i)] $$
\mathcal{M}=\Big\{F\in H^2(\bb D, \bb C^m): F=\mathcal{G}R + BhJ \ \ {\rm for} \ (R, h)\in \mathcal{K}\Big\},
$$ 
where $J\in H^2(\bb D, \bb C^m)$ is of unit norm, $\mathcal{G}$ is the matrix with $p$ columns that form an orthonormal basis of $\mathcal{M}\ominus (\mathcal{M}\cap T_{\mathcal{B^{'}}} H^2(\mathbb{D},\mathbb{C}^m))$,  $\mathcal{K}$ is a $T^*_B \otimes I_{\mathbb{C}^{p+1}}$-invariant subspace of $H^2(\mathbb{D},\mathbb{C}^{p+1})$ with $\cl K\subseteq \{(\tilde{R}\circ B, \tilde{h}\circ B): \tilde{R}\in H^2(\bb D, \bb C^p), \ \tilde{h}\in H^2(\bb D)\}$, along with a unitary from $\mathcal{K}$ onto $\mathcal{M}$ defined by $(R,h)\mapsto \mathcal{G}R+BhJ$.

\item[(ii)] 
$$
\mathcal{M}=\Big\{F\in H^2(\bb D, \bb C^m): F=BhJ \ \ {\rm for} \ h\in \mathcal{K}\Big\},
$$ 
where $J\in H^2(\bb D, \bb C^m)$ is of unit norm and $\mathcal{K}$ is a $T^*_B$-invariant subspace of $H^2(\bb D)$ with $\cl K\subseteq \{\tilde{h}\circ B: \tilde{h}\in H^2(\bb D)\}$, along with a unitary from $\mathcal{K}$ onto $\mathcal{M}$ defined by $h\mapsto BhJ$.
 \end{enumerate}
 Then $\mathcal{M}$ is nearly $T^*_{B,B^{'}}$-invariant with defect 1 and $J$ spans the defect space.
\end{theorem}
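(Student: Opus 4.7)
My plan is to verify the nearly invariance condition directly in each of the two cases, treating case~(i) first and then case~(ii) as its structural degeneration.

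\textbf{Case (i), main computation.} I will begin by pinpointing $\mathcal{M}\cap T_{\mathcal{B}'}H^2(\mathbb{D},\mathbb{C}^m)$. Since $\mathcal{G}e_i=G_i\in\mathcal{M}$, each $(e_i,0)$ lies in $\mathcal{K}$, so under the unitary $\cl U:(R,h)\mapsto\mathcal{G}R+BhJ$ the orthogonal complement of $\mathrm{span}\{G_i\}$ inside $\mathcal{M}$ corresponds to pairs $(R,h)\in\mathcal{K}$ with $\langle R,e_i\rangle=0$ for every $i$, i.e., $R(0)=0$. Combined with $R=\tilde R\circ B$ and $B(0)=0$, this forces $\tilde R(0)=0$, so $R\in BH^2(\mathbb{D},\mathbb{C}^p)$ and I may write $R=BR'$ with $R'=T_B^* R$. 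For $F=\mathcal{G}R+BhJ\in\mathcal{M}\cap T_{\mathcal{B}'}H^2$, the linearity identity $\mathcal{G}(BR')=B\mathcal{G}R'$ lets me factor $F=B(\mathcal{G}R'+hJ)$, giving $T_\Phi^* F=\mathcal{G}R'+hJ$. Using the decomposition $h=\tilde h(0)+B(T_B^* h)$, which follows from $T_B^*(\tilde h\circ B)=(T_z^*\tilde h)\circ B$ (valid because $B(0)=0$), I will rewrite
\[
T_\Phi^* F \;=\; \bigl[\mathcal{G}R' + B(T_B^* h)\,J\bigr] \;+\; \tilde h(0)\,J.
\]
The $(T_B^*\otimes I_{\mathbb{C}^{p+1}})$-invariance of $\mathcal{K}$ gives $(R',T_B^* h)=(T_B^* R,T_B^* h)\in\mathcal{K}$, so the bracketed term equals $\cl U(R',T_B^* h)\in\mathcal{M}$; this establishes the inclusion $T_\Phi^*(\mathcal{M}\cap T_{\mathcal{B}'}H^2)\subseteq\mathcal{M}+\mathbb{C}J$.

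\textbf{Orthogonality of $J$ to $\mathcal{M}$ and positivity of the defect.} To conclude that $\mathbb{C}J$ is actually the defect space I will need both $J\perp\mathcal{M}$ (so that $\mathbb{C}J\subseteq\mathcal{M}^{\perp}$) and the existence of some $F\in\mathcal{M}\cap T_{\mathcal{B}'}H^2$ with $T_\Phi^* F\notin\mathcal{M}$. Both are extracted from the \emph{unitarity} of $\cl U$, not just its surjectivity. Polarizing $\|\mathcal{G}R+BhJ\|^2=\|R\|^2+\|h\|^2$ across pairs in $\mathcal{K}$ and isolating the $BhJ$-contributions, I will deduce the pointwise identity $|J|=1$ a.e.\ on $\mathbb{T}$ (so $J$ is inner) together with the cross-orthogonality $\langle\mathcal{G}R,BhJ\rangle=0$. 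With $J$ inner and $B(0)=0$, $\langle BhJ,J\rangle=\int\overline{Bh}\,d\mu=0$ for every $h\in H^2$; combining this with the orthogonality $\langle\mathcal{G}R,J\rangle=0$ that emerges from a parallel polarization argument yields $J\perp\mathcal{M}$. For strict positivity of the defect I will produce a pair $(R,h)\in\mathcal{K}$ with $R(0)=0$ and $\tilde h(0)\neq 0$, which exists because the hypothesis that the representation is non-degenerate (with $J$ genuinely entering the unitary) rules out the collapsed scenario $\tilde h(0)=0$ for every admissible $h$.

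\textbf{Case (ii) and the main obstacle.} Case~(ii) is the structurally simpler degeneration: $\mathcal{M}\subset T_{\mathcal{B}'}H^2$ makes the first step vacuous, and for $F=BhJ\in\mathcal{M}$ the same decomposition $h=\tilde h(0)+B(T_B^* h)$ immediately produces $T_\Phi^* F=\tilde h(0)J+B(T_B^* h)J\in\mathbb{C}J+\mathcal{M}$ via the $T_B^*$-invariance of $\mathcal{K}$; the orthogonality and defect arguments then mirror those of case~(i). The hard part of the whole argument will be the second step above, namely converting the $L^2$-type norm identity on $\mathcal{K}$ into the pointwise inner-ness of $J$, and propagating it into both $J\perp\mathcal{M}$ and the sharp defect bound---the unitarity hypothesis is what forces these conclusions, and exploiting it requires careful selection of representative pairs in $\mathcal{K}$.
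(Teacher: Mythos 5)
Your case-(i) computation is exactly the paper's proof: you extract $R(0)=0$ from $\langle F,G_i\rangle=0$ via the unitary, use $R=\tilde R\circ B$ with $B(0)=0$ to write $R=B\,(T_B^*\otimes I_{\mathbb{C}^p})R$, split $h=\tilde h(0)+B(T_B^*h)$, and arrive at $T_{\mathcal B}^*F=\bigl[\mathcal G\bigl((T_B^*\otimes I_{\mathbb{C}^p})R\bigr)+B(T_B^*h)J\bigr]+\tilde h(0)J$ with the bracketed term in $\mathcal M$ by the $(T_B^*\otimes I_{\mathbb{C}^{p+1}})$-invariance of $\mathcal K$; the paper handles case (ii) by the same degeneration you describe. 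Up to this point the proposal is correct and matches the paper.

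The difficulty is the second step you flag as the ``hard part.'' Polarizing $\|\mathcal GR+BhJ\|^2=\|R\|^2+\|h\|^2$ only yields $\langle\mathcal GR_1+Bh_1J,\,\mathcal GR_2+Bh_2J\rangle=\langle R_1,R_2\rangle+\langle h_1,h_2\rangle$ for pairs lying \emph{in} $\mathcal K$, and $\mathcal K$ is a coupled subspace: it need not contain elements of the form $(R,0)$ or $(0,h)$ separately, so you cannot ``isolate the $BhJ$-contributions.'' Consequently neither the pointwise conclusion $|J|=1$ a.e.\ on $\mathbb T$ nor the cross-orthogonality $\langle\mathcal GR,BhJ\rangle=0$ follows from the stated hypotheses; the isometry condition $\|BhJ\|=\|h\|$ constrains $J$ only against the particular $h$'s occurring in $\mathcal K$ (and since $|B|=1$ a.e.\ it gives no pointwise information about $J$). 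Likewise, the ``non-degeneracy hypothesis'' you invoke to produce a pair with $\tilde h(0)\neq0$ is not among the theorem's assumptions. None of this extra machinery appears in the paper, whose proof stops at the inclusion $T^*_{\mathcal B}\bigl(\mathcal M\cap T_{\mathcal B'}H^2(\mathbb D,\mathbb C^m)\bigr)\subseteq\mathcal M+\mathbb{C}J$ and does not separately verify $J\perp\mathcal M$ or that the defect is exactly one. So the part of your plan that reproduces the paper is sound, but the route you propose for the sharper conclusions (inner-ness of $J$, orthogonality, strict positivity of the defect) would fail as described and would need a genuinely different mechanism.
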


\section{Proofs of Theorems \ref{Th3.2} \& \ref{Th3.6}, and their Applications I, II \& III}\label{Sec3} 
Before we give the proof of Theorem \ref{Th3.2}, first we state the following result from \cite{DS}. It describes invariant subspaces of finite-rank perturbations of a Toeplitz operator $T^*_\phi$ on $H^2(\mathbb{D})$ with the associated symbol $\phi$ an inner function. 

\begin{theorem}\cite[Theorem 2.1]{DS}\label{Th3.1}
Suppose  $\phi \in H^2(\mathbb{D})$ is an inner function that vanishes at zero; $\{u_i\}_{i=1}^{k}$ 
and $\{v_i\}_{i=1}^{k}$ are two orthonormal  sets in $H^2(\mathbb{D})$. Let $\mathcal{M}$ be a non-zero subspace of $H^2(\mathbb{D})$ that is invariant under $T^*_\phi - \sum_{i=1}^{k}v_i \otimes u_i$. Then there exists a $(T^*_\phi \otimes I_{\mathbb{C}^{p+1}})$-invariant subspace $\mathcal{K}$ of $H^2(\mathbb{D},\mathbb{C}^{p+1})$ such that
$$
    \mathcal{M}=[\mathcal{G}, 1]\mathcal{K},
$$
where $\mathcal{G}=[g_1, \dots, g_p]$ with $\{g_1, \dots, g_p\}$ an orthonormal basis of $span\{P_{\mathcal{M}}u_i : 1 \le i \le k\}$.
Moreover, $\|f\|^2=\|F\|^2+ \|h\|^2$ for all $f=\mathcal{G}F+ h\in \mathcal{M}.$
\end{theorem}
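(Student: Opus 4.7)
My plan is to adapt Sarason's Hilbert space construction for nearly invariant subspaces to this perturbed, higher-defect setting. The goal is to produce an isometry from $\cl M$ onto a subspace $\cl K$ of $H^2(\bb D, \bb C^{p+1})$ that intertwines a natural iteration on $\cl M$ with the coordinate-wise backward shift $T^*_\phi \otimes I_{\bb C^{p+1}}$. The first step is the key observation that on the orthogonal complement $\cl M_0 := \cl M \ominus \mathrm{span}\{g_1, \dots, g_p\}$ the rank-$k$ perturbation vanishes identically: for $f \in \cl M_0$ and each $i$,
$\langle f, u_i \rangle = \langle f, P_\cl M u_i \rangle = 0$
since $P_\cl M u_i$ lies in $\mathrm{span}\{g_j\}$. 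Hence $(T^*_\phi - \sum_i v_i \otimes u_i) f = T^*_\phi f$, and invariance of $\cl M$ forces $T^*_\phi \cl M_0 \subseteq \cl M$—precisely the perturbed higher-defect analog of Sarason's near-invariance condition.

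With this in hand, I introduce the iteration operator $\widetilde T : \cl M \to \cl M$ defined by $\widetilde T f := T^*_\phi P_{\cl M_0} f$, which is well defined by Step 1 and is a contraction. For each $f \in \cl M$ and each $n \ge 0$, I record the scalar coefficients $c_j^{(n)} := \langle \widetilde T^n f, g_j \rangle$ ($1 \le j \le p$) and the model-space residuals $k^{(n)} := P_{K_\phi}\bigl(P_{\cl M_0} \widetilde T^n f\bigr)$ in $K_\phi := H^2 \ominus \phi H^2$. Pythagoras (via the identity $\|g\|^2 = \|T^*_\phi g\|^2 + \|P_{K_\phi} g\|^2$ for $g \in H^2$) gives the one-step telescoping
\[
\|\widetilde T^n f\|^2 = \|\widetilde T^{n+1} f\|^2 + \sum_{j=1}^p |c_j^{(n)}|^2 + \|k^{(n)}\|^2,
\]
and combined with the rewrite $P_{\cl M_0} \widetilde T^n f = k^{(n)} + \phi\, \widetilde T^{n+1} f$ (from $T_\phi T^*_\phi = I - P_{K_\phi}$) the iteration unfolds to
\[
f = \sum_{n=0}^{N} \phi^n \Bigl(\textstyle\sum_j c_j^{(n)} g_j + k^{(n)}\Bigr) + \phi^{N+1} \widetilde T^{N+1} f.
\]

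The crux—and where I expect the real technical effort—is verifying that $\widetilde T$ is a $C._{0}$ contraction, so that the tail $\phi^{N+1} \widetilde T^{N+1} f$ vanishes in $H^2$-norm (using $\|\phi^{N+1} g\| = \|g\|$ for inner $\phi$). Lemma \ref{Lem2.2} of Benhida--Timotin is the intended tool, applied not directly to $\widetilde T$ but to the ambient operator $T^*_\phi P_{\cl N}$ on $H^2(\bb D)$ with $\cl N := H^2 \ominus \mathrm{span}\{g_1,\dots,g_p\}$: $T^*_\phi$ is a $C._{0}$ contraction, $I - T_\phi T^*_\phi = P_{K_\phi}$ is of finite rank when $\phi$ is a finite Blaschke product, and $\cl N$ has codimension $p$ in $H^2$. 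Since $T^*_\phi P_\cl N$ agrees with $\widetilde T$ on $\cl M$, this transfers the $C._{0}$-ness to $\widetilde T$. Granted this, letting $N\to\infty$ yields $f = \sum_j (\tilde F_j \circ \phi) g_j + h$, with $\tilde F_j(z) := \sum_n c_j^{(n)} z^n \in H^2(\bb D)$ and $h := \sum_n \phi^n k^{(n)} \in H^2(\bb D)$; the isometries $\|\tilde F_j \circ \phi\| = \|\tilde F_j\|$ (composition with an inner function vanishing at $0$) and the mutual orthogonality of the $\phi^n K_\phi$'s deliver the norm identity $\|f\|^2 = \sum_j \|F_j\|^2 + \|h\|^2$ with $F_j := \tilde F_j \circ \phi$.

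Finally, putting $\cl K := \{(F_1, \dots, F_p, h) : f \in \cl M\} \subseteq H^2(\bb D, \bb C^{p+1})$, the map $\Lambda : f \mapsto (F_1, \dots, F_p, h)$ is a unitary onto $\cl K$ by the above Parseval-type identity, and it intertwines $\widetilde T$ with $T^*_\phi \otimes I_{\bb C^{p+1}}$: the relations $T^*_\phi(\tilde F_j \circ \phi) = (T^*_z \tilde F_j) \circ \phi$ and $T^*_\phi h = \sum_n \phi^n k^{(n+1)}$ are precisely the one-step index shifts on the defining coefficient sequences, which is exactly the action of $T^*_\phi \otimes I_{\bb C^{p+1}}$. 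Therefore $\cl K$ is $(T^*_\phi \otimes I_{\bb C^{p+1}})$-invariant and $\cl M = [\cl G, 1]\cl K$, completing the proof.
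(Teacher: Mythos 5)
Your architecture coincides with the paper's own treatment (the proof of Theorem \ref{Th3.2}, of which this statement is the scalar case): the observation that the perturbation annihilates $\cl M \ominus \cl W$, the iteration $f = \cl G A_0 + \phi\,\widetilde T f + P_{K_\phi}(P_{\cl M_0}f)$, the telescoping Parseval identity, and the construction of $\cl K$ as the space of coefficient tuples are exactly the steps taken there. However, the step you yourself flag as the crux --- forcing the tail $\widetilde T^{N+1}f$ to zero --- is misapplied as written, and in a way that both violates the hypotheses of Lemma \ref{Lem2.2} and would silently restrict the theorem to finite Blaschke products. Under the paper's convention a $C._{0}$ contraction is one whose \emph{adjoint} powers tend to zero strongly; $T^*_\phi$ is therefore not a $C._{0}$ contraction (its adjoint $T_\phi$ is an isometry), so you cannot take $T = T^*_\phi$ in the lemma. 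Moreover, with that choice the finite-rank hypothesis reads $\operatorname{rank}(I - T_\phi T^*_\phi) = \dim K_\phi < \infty$, which forces $\phi$ to be a finite Blaschke product --- an assumption absent from the theorem, which allows any inner $\phi$ vanishing at $0$ (e.g.\ a singular inner function). And even granting all of that, the lemma's conclusion for $T^*_\phi P_{\cl N}$ would concern the powers $(P_{\cl N} T_\phi)^n$, not the powers $(T^*_\phi P_{\cl N})^n$ you actually need to control.

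The repair is exactly what the paper does: apply Lemma \ref{Lem2.2} to the \emph{forward} operator $T = T_\phi$, which is a $C._{0}$ contraction with $I - T^*_\phi T_\phi = 0$ (so the finite-rank hypothesis is automatic, with no Blaschke restriction), together with the finite-codimension subspace $\cl N = H^2 \ominus \cl W$. The lemma then yields that $T_\phi(I - P_{\cl W})$ is $C._{0}$, i.e.\ $\|\bigl((I-P_{\cl W})T^*_\phi\bigr)^n h\| \to 0$ for every $h$, and writing $\widetilde T^{n} f = T^*_\phi\bigl((I - P_{\cl W})T^*_\phi\bigr)^{n-1}(I-P_{\cl W})f$ gives precisely the decay you need. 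With this correction the remainder of your argument (uniqueness and linearity of the coefficient map, closedness of $\cl K$ via the isometry, and the intertwining with $T^*_\phi \otimes I_{\bb C^{p+1}}$) goes through and matches the paper's proof.
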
    

This result provides a precise structure of invariant subspaces under finite-rank perturbations 
of the Toeplitz operator $T^*_\phi$ on $H^2(\mathbb{D})$. Note that our result, Theorem \ref{Th3.2}, extends this description to the vector-valued setting. This extension not only broadens the scope of Theorem \ref{Th3.1} but also brings to light the technical challenges that arise due to the added complexity of the vector-valued context. Before we proceed with our proof, we note that the assumption of orthonormality of the finite sets $\{u_i\}_{i=1}^{k}$ and $\{v_i\}_{i=1}^{k}$ in Theorem \ref{Th3.1} is redundant; therefore, we do not impose it in our results. 

\vspace{.3 cm}
 
\noindent{\underline{\bf Proof of Theorem \ref{Th3.2}}}  
 
 \vspace{.2 cm} 
 
Suppose $span\{P_{\mathcal{M}}U_i : 1 \le i \le k\}$ is denoted by $\mathcal{W}$. First, we consider the case when $\mathcal{W}=\{0\}$. Then $\{U_1, U_2, \dots, U_k\}$ is orthogonal to $\mathcal{M}$. Therefore, for any $F \in \mathcal{M}$,
\begin{eqnarray*}
   \Big(T^*_\Phi - \sum_{i=1}^{k}V_i \otimes U_i\Big)F &=& T^*_\Phi F - \sum_{i=1}^{k} \langle F,U_i \rangle V_i\\
   &=& T^*_\Phi F.
\end{eqnarray*}
This shows that $\mathcal{M}$ is $T^*_\Phi$-invariant. Thus, we can write 
\begin{eqnarray*}
    \mathcal{M}=[I_m]\mathcal{K}, \text{ where } \mathcal{K}=\mathcal{M},
\end{eqnarray*}
which proves the result for this case.  

Now, we assume that $\mathcal{W}\ne \{0\}$. Let the dimension of $\cl W$ equals $p$, and let 
$\{G_1, \dots, G_p\}$ be its orthonormal basis. We decompose $\mathcal{M}$ as 
\begin{eqnarray*}
    \mathcal{M}= \mathcal{W} \oplus (\mathcal{M} \ominus \mathcal{W}).
\end{eqnarray*}
Therefore, any $F \in \mathcal{M}$ can be expressed as
\begin{eqnarray}\label{e3.1}
F=a_{01}G_1 +...+ a_{0p}G_p +  P_{\mathcal{M} \ominus \mathcal{W}}(F), \ {\rm where} \ a_{0i}\in \mathbb{C}.
\end{eqnarray}
Now, $F_1:=P_{\mathcal{M} \ominus \mathcal{W}}(F)$ is in $\mathcal{M}$, which implies   
\begin{eqnarray*}
L_1 :=\Big(T^*_\Phi - \sum_{i=1}^{k} V_i \otimes U_i\Big)F_1 =T^*_\Phi F_1 \in \mathcal{M},
\end{eqnarray*}
which further yields 
\begin{eqnarray*}
    F_1=T_\Phi L_1 + P_{\mathcal{K}_\Phi}F_1,
\end{eqnarray*}
where $\mathcal{K}_\Phi=H^2(\mathbb{D},\mathbb{C}^m)\ominus T_\Phi H^2(\mathbb{D},\mathbb{C}^m)$. 
Then, using Equation (\ref{e3.1}), we get
\begin{eqnarray*}
    F=(a_{01}G_1 +...+ a_{0p}G_p) + T_\Phi L_1 + P_{\mathcal{K}_\Phi}F_1
\end{eqnarray*}
and
\begin{equation}\label{e3.2n}
    \|F\|^2=\sum_{i=1}^{p} |a_{0i}|^2 + \|L_1\|^2 +\|P_{\mathcal{K}_\Phi}F_1\|^2.
\end{equation}
In other words,
\begin{equation}\label{e3.2}
F=\mathcal{G}A_0 + T_\Phi L_1 + P_{\mathcal{K}_\Phi}F_1, 
\end{equation}
where $\cl G=[G_1, \dots, G_p]$, that is, the matrix with $p$ columns $G_1, \dots, G_p$.
Further, since $L_1 \in \mathcal{M}$, therefore by applying the similar arguments as employed above, we obtain 
\begin{eqnarray}\label{e3.3}
 L_1=\mathcal{G}A_1 + T_\Phi L_2 + P_{\mathcal{K}_\Phi}F_2,  \end{eqnarray}
where $L_2=(T^*_\Phi P_{\mathcal{M} \ominus \mathcal{W}})L_1$,  $F_2=P_{\mathcal{M} \ominus \mathcal{W}}(L_1)$, and $A_1\in \mathbb{C}^p$.  

Then, using Equations (\ref{e3.3}) and (\ref{e3.2}), we get

\begin{eqnarray*}
F &=& \mathcal{G}A_0 + T_\Phi(\mathcal{G}A_1 +  T_\Phi L_2 + P_{\mathcal{K}_\Phi}F_2) + P_{\mathcal{K}_\Phi}F_1\\
&=& \mathcal{G}(A_0 + A_1 \phi) + L_2 \phi^2 + P_{\mathcal{K}_\Phi}F_1 + (P_{\mathcal{K}_\Phi}F_2)\phi.
\end{eqnarray*}
and 
$$
    \|F\|^2=\|A_0\|^2 + \|A_1\|^2 + \|L_2\|^2+\|P_{\mathcal{K}_\Phi}F_1 \|^2 + \|P_{\mathcal{K}_\Phi}F_2\|^2.
$$

Continuing the same process, we get, for each $n \in \mathbb{N}$
\begin{equation}\label{MTe1}
F = \mathcal{G}\Bigg(\sum_{i=0}^{n-1}A_i\phi^i\Bigg) + L_{n}\phi^{n} + \sum_{i=1}^{n}(P_{\mathcal{K}_\Phi}F_i)\phi^{i-1}
\end{equation}
and  
\begin{equation}\label{MTe2}
    \|F\|^2=\sum_{i=0}^{n-1}\|A_i\|^2 + \|L_{n}\|^2 + \sum_{i=1}^{n}\|P_{\mathcal{K}_\Phi}F_i\|^2.
\end{equation}

Let $P_\mathcal{W}$ denote the orthogonal projection of $H^2(\mathbb{D},\mathbb{C}^m)$ onto $\mathcal{W}$. Then for any $L \in \mathcal{M}$, $P_{(\mathcal{M}\ominus \mathcal{W})}L=(I_m -P_{\mathcal{W}})L$. Also it is easy to observe that $T_\Phi $ is a  $C._{0}$ contraction, therefore by Lemma \ref{Lem2.2},  $T_\Phi(I_m-P_{\mathcal{W}})$ is a $C._{0}$ contraction.
Thus, 
\begin{eqnarray*}
\|L_{n}\|&=& \|\big(T^*_\Phi P_{\mathcal{M}\ominus\mathcal{W}}\big)^{n}F\|\\
&=& \|\big(T^*_\Phi (I_m-P_{\cl W})\big)^{n}F\|\\
&=& \|T^*_\Phi\Big((I_m-P_{\cl W})T^*_\Phi\Big)^{n-1}(I_m-P_{\mathcal{W}})F\|\\
&\le & \|T^*_\Phi\|\|\big((T_\Phi(I_m-P_\mathcal{W}))^*\big)^{(n-1)}(I_m-P_{\cl W})F\|\\
& \longrightarrow & 0 \text{ as } n \longrightarrow \infty.
\end{eqnarray*}
This shows that $L_{n} \longrightarrow 0 \text{ as } n \longrightarrow \infty$.  

Next, using Equation (\ref{MTe2}), $\{A_i\}_{i=0}^\infty$ is a square summable sequence in $\bb C^p$.  In addition, $\{\phi^i\}_{i=0}^\infty$ is an orthonormal sequence in $H^2(\bb D)$. Therefore, 
$R:=\sum_{i=0}^{\infty}  A_i \phi^i$ is a well-defined function in $H^2(\bb D, \bb C^p)$. Further, $T_\Phi$ is a pure isometry on $H^2(\bb D, \bb C^m);$ therefore, by Wold decomposition 
$$
H^2(\mathbb{D},\mathbb{C}^m)= \bigoplus_{i=0}^{\infty} T_\Phi^i\mathcal{K}_\Phi. 
$$

Then, since $\sum_{i=1}^{\infty}\|P_{\mathcal{K}_{\Phi}}F_{i}\|^2 < \infty$, therefore $\sum_{i=1}^{\infty}T_{\Phi}^{i-1}(P_{\mathcal{K}_{\Phi}}F_i)$ is a well-defined function in 
$H^2(\bb D, \bb C^m)$. Note that 
$$
T^{i-1}_\Phi(P_{\mathcal{K}_{\Phi}}F_i)=(P_{\mathcal{K}_{\Phi}}F_i)\phi^{i-1}.
$$
Thus, we conclude that $H:=\sum_{n=1}^{\infty}(P_{\mathcal{K}_{\Phi}}F_{n})\phi^{n-1}$ is a well-defined function in $H^2(\bb D, \bb C^m).$ We now consider the following $n^{th}$ partial sums:
\begin{eqnarray*}
R_n=\sum_{i=0}^{n}A_i\phi^i \quad \text{ and } \quad H_n=\sum_{i=1}^{n}(P_{\mathcal{K}_{\Phi}}F_{i})\phi^{i-1}.
\end{eqnarray*}

Since $R_n \longrightarrow R$ and $H_n \longrightarrow H$, therefore, for each $z\in \bb D$, $\mathcal{G}(z)R_{n-1}(z) \longrightarrow \mathcal{G}(z)R(z)$ and $H_n(z) \longrightarrow H(z)$. Thus, $\mathcal{G}(z)R_{n-1}(z) + H_n(z) \longrightarrow \mathcal{G}(z)R(z) + H(z)$ for each $z\in \bb D$. 
But,
\begin{eqnarray*}
    \|F-(\mathcal{G}R_{n-1} + H_n)\|=\|L_{n}\| \longrightarrow 0,
\end{eqnarray*}
which implies that, for each $z\in \bb D, \ \ \mathcal{G}(z)R_{n-1}(z) + H_n(z) \longrightarrow F(z)$. Hence, we conclude 
\begin{eqnarray}\label{e3.5}
F = \mathcal{G}R + H.
\end{eqnarray}
Additionally, Equation (\ref{MTe2}) implies  
\begin{eqnarray}
    \|F\|^2=\|R\|^2 +\|H\|^2 .
\end{eqnarray}

We note that $F$ in Equation (\ref{e3.5}) satisfies the following conditions:  
\begin{enumerate}
\item[C1.] $R=\sum_{n=0}^{\infty}  A_n \phi^n$, $P_\mathcal{W}L_n=\mathcal{G}A_n$, where 
$L_n=(T^*_\Phi P_{\mathcal{M}\ominus \mathcal{W}})^nF \ \forall \ n \ge 0 $.
\item[C2.] $H=\sum_{n=1}^{\infty} (P_{\mathcal{K}_{\Phi}}F_{n})\phi^{n-1},$ where 
$F_n=P_{\mathcal{M}\ominus \mathcal{W}}L_{n-1} \ \forall \ n\ge 1.$
\item[C3.] $\|F\|^2=\|R\|^2 +\|H\|^2.$
\end{enumerate}

The representation of $F$ in Equation (\ref{e3.5}) that satisfies conditions C1, C2, and C3 is unique. Let $F$ has the following two representations satisfying C1, C2, and C3: 
$$
F = \mathcal{G}R+ H, \ {\rm where} \ R=\sum_{n=0}^{\infty}  A_n \phi^n \ {\rm and} \ 
H=\sum_{n=1}^{\infty}  (P_{\mathcal{K}_{\Phi}}F_{n})\phi^{n-1},
$$
and 
$$
F=\mathcal{G}\tilde{R}+ \tilde{H}, \text{ where }\tilde{R}=\sum_{n=0}^{\infty}  \tilde{A_n} \phi^n \text{ and }\tilde{H}=\sum_{n=1}^{\infty}  (P_{\mathcal{K}_{\Phi}}\tilde{F}_n)\phi^{n-1}.
$$
Then from C1, we have
\begin{eqnarray*}
    \mathcal{G}A_n=P_\mathcal{W}(T^*_\Phi P_{\mathcal{M}\ominus \mathcal{W}})^nF=\mathcal{G}\tilde{A_n},
\end{eqnarray*}
which shows that $A_n=\tilde{A_n}$. Thus $F=\tilde{F}$; hence, $H=\tilde{H}$. Therefore, the representation of $F$ given by Equation (\ref{e3.5}) is unique.  

\vspace{.2 cm}

We now define a set $\mathcal{K} \subset H^2(\mathbb{D},\mathbb{C}^{p+m})$ as:
$$
\mathcal{K}=\{(R,H): F = \mathcal{G}R + H \in \mathcal{M} \ {\rm and} \ F, R, \ {\rm and} \ H \ {\rm satisfy \ C1, C2, C3} \}.   
$$

First, we show that $\mathcal{K}$ is a vector subspace of $H^2(\bb D, \bb C^{p+m})$. Take $(R,H)$ and $(\tilde{R}, \tilde{H})$ in $\mathcal{K}$, and a scalar $\alpha\in \bb C$. Then there exist $F$ and $\tilde{F}\in \mathcal{M}$ such that
$$
F=\mathcal{G}R+ H, \ \text{ where} \ R=\sum_{n=0}^{\infty}  A_n \phi^n, \ H=\sum_{n=1}^{\infty}  (P_{\mathcal{K}_{\Phi}}F_{n})\phi^{n-1},
$$
and 
$$
\tilde{F} = \mathcal{G}\tilde{R} + \tilde{H}, \ \text{where} \ \tilde{R} = \sum_{n=0}^{\infty}  \tilde{A_n} \phi^n, \ \tilde{H}=\sum_{n=1}^{\infty}  (P_{\mathcal{K}_{\Phi}}\tilde{F}_n)\phi^{n-1}. 
$$

This gives 
\begin{eqnarray}\label{e3.7}
F+\alpha \tilde{F} &=& \mathcal{G}(R+ \alpha\tilde{R})+ (H+\alpha\tilde{H})\nonumber \\
&=&\mathcal{G}\Bigg(\sum_{n=0}^{\infty}  (A_n+ \alpha\tilde{A_n}) \phi^n \Bigg)+ \sum_{n=1}^{\infty}  P_{\mathcal{K}_{\Phi}}(F_{n}+\alpha\tilde{F_n})\phi^{n-1},
\end{eqnarray}
and
\begin{eqnarray}\label{e3.8}
    \mathcal{G}(A_n + \alpha\tilde{A_n})&=& \mathcal{G}A_n +\alpha \mathcal{G} \tilde{A_n} \nonumber\\
    &=& P_\mathcal{W}(T^*_\Phi P_{\mathcal{M}\ominus \mathcal{W}})^nF + \alpha P_\mathcal{W}(T^*_\Phi P_{\mathcal{M}\ominus \mathcal{W}})^n\tilde{F} \nonumber \\
    &=& P_\mathcal{W}(T^*_\Phi P_{\mathcal{M}\ominus \mathcal{W}})^n(F+\alpha\tilde{F}).
\end{eqnarray}

Equations (\ref{e3.7}) and (\ref{e3.8}) also yield 
\begin{eqnarray}\label{e3.9}
    \|F+\alpha\tilde{F}\|^2=\|R+\alpha\tilde{R}\|^2+\|H+\alpha\tilde{H}\|^2 .
\end{eqnarray}

Thus, from Equations (\ref{e3.7}), (\ref{e3.8}), and (\ref{e3.9}), it is evident that $F+\tilde{F},$ 
$R+\tilde{R},$ and $H+\tilde{H}$ satisfy C1, C2, and C3. Consequently, $\mathcal{K}$ is a vector subspace. To show that $\mathcal{K}$ is closed, let $\{(R_n,H_n)\}_n$ be a Cauchy sequence in $\mathcal{K}$. Then 
\begin{eqnarray*}
\|(R_n,H_n)-(R_r,H_r)\|^2&=& \|R_n - R_r\|^2 + \|H_n-H_r\|^2\\
&=& \|\mathcal{G}(R_n-R_r) + (H_n-H_r)\|^2.
\end{eqnarray*}

This shows that the sequence $\{F_n : F_n=\mathcal{G}R_n + H_n\}_n$ is a Cauchy sequence in $\mathcal{M}$; therefore, there exist $F\in \mathcal{M}$ and $(R,H)\in \mathcal{K}$ such that 
$F_n \longrightarrow F=\mathcal{G}R+H$. Thus, 
\begin{eqnarray*}
    \|(R_n, H_n) - (R, H)\|^2=\|F_n - F\|^2 \longrightarrow 0,
\end{eqnarray*}
which shows that $(R_n, H_n) \longrightarrow (R,H)$, and thus $\mathcal{K}$ is closed.  

Lastly, we show that $\mathcal{K}$ is $( T^*_\phi \otimes I_{\mathbb{C}^{p+m}})$-invariant. Choose $(R,H)\in \mathcal{K}$, then by definition of $\mathcal{K}$, there exists $F\in \mathcal{M}$ such that
$$
F=\mathcal{G}R + H,
$$
and $F, \ R,$ and $H$ satisfy conditions C1, C2, and C3. Then, by the uniqueness of the representation of $F$, we have 
$$
F=\mathcal{G}A_0 + L_1\phi + P_{\mathcal{K}_{\Phi}}F_1,
$$
where $L_1=\mathcal{G}\Big(\sum_{n=1}^{\infty}A_n\phi^{n-1}\Big) + \sum_{n=2}^{\infty}(P_{\mathcal{K}_{\Phi}}F_n)\phi^{n-2}\in \cl M$ with $R=\sum_{n=0}^\infty A_n \phi^n$ and 
$H=\sum_{n=1}^\infty(P_{{\cl K}_\Phi}F_n) \phi^{n-1}.$ Furthermore, 
\begin{eqnarray*}
T_\phi^*\otimes I_{\bb C^{p+m}}(R, H) &=& \Big((T_{\phi}^*\otimes I_{\bb C^p})R, (T_{\phi}^*\otimes I_{\bb C^m}H)\Big)\\
&=& \Big(\sum_{n=1}^\infty A_n \phi^{n-1}, \sum_{n=2}^{\infty}(P_{\mathcal{K}_{\Phi}}F_n)\phi^{n-2}\Big).
\end{eqnarray*}
But, $L_1=\mathcal{G}\Big(\sum_{n=1}^{\infty}A_n\phi^{n-1}\Big) + \sum_{n=2}^{\infty}(P_{\mathcal{K}_{\Phi}}F_n)\phi^{n-2}\in \cl M$; hence we conclude $T_\phi^*\otimes I_{\bb C^{p+m}}(R, H)\in \cl K.$ This completes the proof.  
\qed

Our proof of Theorem \ref{Th3.2} does not simply give the existence of the subspace $\mathcal{K}$  
that appears in the statement of the theorem; in fact, it
constructs it. Before proceeding further, we want to bring to light some crucial details about $\mathcal{K}$ hidden in the proof. Apart from being informative in their own right, these are significant for our work as we advance in this paper; therefore, we note these in the following remark. 
\begin{remark}\label{uni3}

We have shown in the proof of Theorem \ref{Th3.2} that given an element $F$ in $\mathcal{M},$ the functions $R\in H^2(\mathbb{D}, \mathbb{C}^p)$ and $H\in H^2(\mathbb{D}, \mathbb{C}^m)$ such that $(R,H)\in \mathcal{K}$ with $F=\mathcal{G}R+H$ indeed are uniquely determined by conditions C1, C2, and C3 (as stated in the proof of Theorem \ref{Th3.2}). Hence, we precisely know the subspace $\mathcal{K}$. Furthermore, this allows us to define a unitary $\cl U: \mathcal{K}\to \mathcal{M}$ by  
\begin{equation}\label{rep3}
\cl U = [\mathcal{G}, I_{m}]:
\begin{pmatrix}
 R\\
 H
\end{pmatrix}
\mapsto \mathcal{G}R+H.
\end{equation}
\end{remark}

Interestingly, the existence of such a unitary is sufficient to guarantee that elements of $\mathcal{K}$ are determined by conditions C1, C2, and C3. We make it precise in the following result.

\begin{proposition}\label{uni1} Let $\mathcal{M}$ be a subspace of $H^2(\mathbb{D}, \mathbb{C}^m)$, $\{G_i\}_{i=1}^{p}$ be an orthonormal set in 
$\mathcal{M}, \ \mathcal{K}$ be a subspace of $H^2(\mathbb{D}, \mathbb{C}^{p+m})$, and let 
$\cl U:\mathcal{K} \to \mathcal{M}$ be the unitary defined by (\ref{rep3}), where $\mathcal{G}$ 
is the matrix consisting of columns $G_1, \dots, G_p$. Further, let  $\phi$ be an inner function in $H^2(\mathbb{D})$ that vanishes at zero such that $\mathcal{K}$ is invariant under $T_\phi^*\otimes I_{{\mathbb{C}}^{p+m}}$, and $\mathcal{K}\subseteq \{(\tilde{R}\circ \phi, H): \tilde{R}\in H^2(\mathbb{D}, \mathbb{C}^p), \ H\in H^2(\mathbb{D}, \mathbb{C}^m)\}$. Then given any $(R, H)\in \mathcal{K}$, the functions $F, R,$ and $H$ satisfy conditions C1, C2, and C3, where $F=\mathcal{G}R+H \in \mathcal{M}$, and $\mathcal{W}$ (as appears in conditions C1, C2, and C3) equals the linear span of $\{G_i\}_{i=1}^{p}$. 
\end{proposition}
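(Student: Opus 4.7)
The plan is to show that the unique pair $(R, H)\in\cl K$ with $\cl U(R, H) = F$ is in fact the pair produced by the iterative decomposition in the proof of Theorem \ref{Th3.2}. Condition C3 is immediate from the unitarity of $\cl U$: $\|F\|^2 = \|R\|^2 + \|H\|^2$. For the first half of C1, the hypothesis $\cl K\subseteq\{(\tilde R\circ\phi, H)\}$ lets me write $R = \tilde R\circ\phi$ for some $\tilde R\in H^2(\bb D, \bb C^p)$; expanding $\tilde R(z) = \sum_{n\ge 0}A_n z^n$ gives $R = \sum_{n\ge 0}A_n\phi^n$, and since $\phi(0) = 0$, one has $R(0) = A_0$.

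Next, I would run the iteration intrinsically inside $\cl K$. Using the $T_\phi^*\otimes I_{\bb C^{p+m}}$-invariance, set $(R^{(n)}, H^{(n)}) := (T_\phi^*\otimes I_{\bb C^{p+m}})^n(R, H)\in\cl K$ and $L_n' := \cl U(R^{(n)}, H^{(n)}) = \cl GR^{(n)} + H^{(n)}\in\cl M$. The identities $T_\phi^*\mathbf{1} = 0$ and $T_\phi^*\phi^k = \phi^{k-1}$ for $k\ge 1$ yield $R^{(n)} = \sum_{k\ge n}A_k\phi^{k-n}$, so $R^{(n)}(0) = A_n$ and $\cl GR^{(n)} = \cl GA_n + T_\Phi\cl GR^{(n+1)}$. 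Coupling this with the wandering decomposition $H^{(n)} = P_{\cl K_\Phi}H^{(n)} + T_\Phi H^{(n+1)}$ for the pure isometry $T_\Phi$ produces the recursion
\[
L_n' = \cl GA_n + T_\Phi L_{n+1}' + P_{\cl K_\Phi}H^{(n)}.
\]

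The core of the argument is to prove by induction that $L_n' = L_n := (T_\Phi^*P_{\cl M\ominus\cl W})^nF$. The inductive step reduces to the pivotal identity $P_{\cl W}L_n' = \cl GA_n$: granted this, $P_{\cl M\ominus\cl W}L_n' = T_\Phi L_{n+1}' + P_{\cl K_\Phi}H^{(n)}$, and applying $T_\Phi^*$ together with $T_\Phi^*T_\Phi = I$ and $P_{\cl K_\Phi}H^{(n)}\in\ker T_\Phi^*$ gives $L_{n+1}' = T_\Phi^*P_{\cl M\ominus\cl W}L_n'$, closing the induction. I expect the identity $P_{\cl W}L_n' = \cl GA_n$ to be the main obstacle: unwinding it amounts to showing $\langle L_n', G_j\rangle = (A_n)_j$ for every $j$, which by the unitarity of $\cl U$ is tantamount to the identification $\cl U^{-1}G_j = (E_j\cdot\mathbf{1}, 0)$, i.e., that the constant element $(E_j\cdot\mathbf{1}, 0)$ belongs to $\cl K$ and is sent to $G_j$ by $\cl U$. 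I would attack this by combining the polarized isometric identity $\langle\cl GR_1 + H_1, \cl GR_2 + H_2\rangle = \langle R_1, R_2\rangle + \langle H_1, H_2\rangle$ on $\cl K$ with the inclusion $\cl K\subseteq\{(\tilde R\circ\phi, H)\}$ and the $T_\phi^*\otimes I$-invariance to pin down the preimage of each $G_j$. Once $L_n = L_n'$ is established, the second half of C1 follows immediately, and C2 drops out by writing $F_n = P_{\cl M\ominus\cl W}L_{n-1} = T_\Phi L_n + P_{\cl K_\Phi}H^{(n-1)}$, extracting $P_{\cl K_\Phi}F_n = P_{\cl K_\Phi}H^{(n-1)}$ (since $T_\Phi L_n\perp \cl K_\Phi$), and summing via the Wold expansion $H = \sum_{k\ge 0}\phi^kP_{\cl K_\Phi}H^{(k)}$.
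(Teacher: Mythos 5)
Your overall strategy coincides with the paper's: after extracting C3 from unitarity and expanding $R=\sum_{n}A_n\phi^n$, you iterate $T_\phi^*\otimes I_{\bb C^{p+m}}$ inside $\cl K$, transport the iterates through $\cl U$ to elements $L_n'\in\cl M$ obeying $L_n'=\cl G A_n+T_\Phi L_{n+1}'+P_{\cl K_\Phi}H^{(n)}$, and reduce everything to the single identity $P_{\cl W}L_n'=\cl G A_n$. The paper does exactly this (its $Y_1,Y_2,\dots$ are your $L_1',L_2',\dots$), and it disposes of the pivotal identity by writing $\langle \cl G(R-R(0))+H,\,G_i\rangle=\langle (R-R(0),H),\,(E_i,0)\rangle$, i.e., by treating $(E_i,0)$ as the $\cl U$-preimage of $G_i$. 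You correctly isolate this as the crux and reduce it to showing that $(E_j,0)\in\cl K$ with $\cl U(E_j,0)=G_j$, but you do not prove it: you only announce a plan ("combine the polarized isometric identity with the inclusion and the invariance"). That is a genuine gap, and it cannot be closed from the stated hypotheses alone.

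Indeed, the identity $\langle\cl GR+H,G_j\rangle=\langle R,E_j\rangle$ fails in general under the hypotheses of the proposition. Take $m=p=1$, $\phi=z$, $\alpha=(1+i)/2$ (so $|\alpha|^2+|1-\alpha|^2=1$), and let $\cl K=\mathrm{span}\{(\alpha,(1-\alpha)z),(0,1)\}\subset H^2(\bb D,\bb C^2)$. Then $\cl K$ is $(T_z^*\oplus T_z^*)$-invariant, the inclusion $\cl K\subseteq\{(\tilde R\circ\phi,H)\}$ is automatic for $\phi=z$, and the map $(r,h)\mapsto zr+h$ sends $(c_1\alpha,\,c_1(1-\alpha)z+c_2)$ to $c_1z+c_2$, hence is a unitary onto $\cl M=\mathrm{span}\{1,z\}$ with $G_1=z$ a unit vector in $\cl M$. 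Yet for $(R,H)=(\alpha,(1-\alpha)z)$ one gets $F=z$, so $P_{\cl W}F=z=\cl G\cdot 1$ while $A_0=R(0)=\alpha\neq 1$, and C1 fails; likewise C2 forces $H=0$ whereas $H=(1-\alpha)z\neq 0$. Here $(1,0)\notin\cl K$, so the identification $\cl U^{-1}G_1=(E_1,0)$ that you hope to extract is simply unavailable. In other words, your "main obstacle" is not merely unproved but false at this level of generality (the paper's own chain of equalities silently assumes it as well). The argument goes through once one adds the hypothesis that each $(E_j,0)$ lies in $\cl K$ (equivalently $\cl U^{*}G_j=(E_j,0)$), which does hold for the $\cl K$ constructed in the proof of Theorem \ref{Th3.2}; with that supplement your induction closes exactly as you describe.
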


\begin{proof} Let $(R, H)\in \mathcal{K}.$ Then $F=\mathcal{G}R+H \in \mathcal{M}.$ Let 
$T_\Phi := T_\phi\otimes I_{\bb C^m}.$ First, we recall conditions C1, C2, and C3 that we want to prove. 
\begin{enumerate}
\item[C1.] $R=\sum_{n=0}^{\infty}  A_n \phi^n, \ \mathcal{G}A_n=P_\mathcal{W}L_n$, where 
$L_n=(T^*_\Phi P_{\mathcal{M}\ominus \mathcal{W}})^n F\ \forall \ n \ge 0$.
\item[C2.] $ H=\sum_{n=0}^{\infty} P_{\mathcal{K}_{\Phi}}(H_{n})\phi^{n},$ where 
$H_n=P_{\mathcal{M}\ominus \mathcal{W}}L_{n} \ \forall \ n\ge 0.$
\item[C3.] $\|F\|^2=\|R\|^2 +\|H\|^2.$
\end{enumerate}

Since $\cl U$ is a unitary, the condition C3 holds. For conditions C1 and C2, we first note that our hypotheses allow us to assume $R=\sum_{n=0}^{\infty}A_n\phi^n$ for some square summable sequence 
$\{A_n\}_{n=0}^\infty$ in $\mathbb{C}^p$. Also, since $T_\Phi$ is an isometry on $H^2(\mathbb{D}, \mathbb{C}^m)$, we can decompose $H$ as
$$H=\bigoplus_{n=0}^\infty T_\Phi^{n}(P_{\mathcal{K}_\Phi}H_n)=\bigoplus_{n=0}^\infty P_{\mathcal{K}_\Phi}(H_n) \phi^{n},$$ 
where $\cl K_{\Phi}=H^2(\bb D, \bb C^m)\ominus T_{\Phi} H^2(\bb D, \bb C^m)$ and $H_n\in H^2(\mathbb{D}, \mathbb{C}^m)$. 

All that remains to complete the proof is to show that, for each $n\ge 0$,  
$$
\mathcal{G}A_n = P_\mathcal{W}L_n \ \ {\rm and} \ \  P_{\mathcal{K}_\Phi}H_n=P_{\mathcal{K}_\Phi}P_{\mathcal{M}\ominus \mathcal{W}}L_{n},
$$ 
where $L_n=(T^*_\Phi P_{\mathcal{M}\ominus \mathcal{W}})^nF$. We start by decomposing $F$ as  
\begin{eqnarray*}
F &=&\mathcal{G}R(0)+\mathcal{G}(R-R(0))+H\\
&=& \mathcal{G}R(0)+\mathcal{G}(R-R(0))+T_\Phi T_\Phi^* H+P_{\mathcal{K}_\Phi}H\\
&=& \mathcal{G}R(0)+T_\Phi (\mathcal{G}T_\Psi^*R+T_\Phi^* H)+P_{\mathcal{K}_\Phi}H,
\end{eqnarray*}
where $T_\Phi:=T_\phi\otimes I_{\mathbb{C}^{m}}$ and $T_\Psi := T_\phi \otimes I_{\mathbb{C}^p}$.

Since $\mathcal{K}$ is invariant under $T_\phi^*\otimes I_{{\mathbb{C}}^{p+m}}=T_\Psi^*\oplus T_\Phi^*$, therefore $(T_\Psi^*F, T_\Phi^* H)\in \mathcal{K}$. Then 
$Y_1:=\mathcal{G}T_\Psi^*F+T_\Phi^* H$ belongs to $\mathcal{M}.$ Thus, 
\begin{equation*}
F=\mathcal{G}R(0)+T_\Phi Y_1+P_{\mathcal{K}_\Phi}H.
\end{equation*}

Note that    
$$
\langle{F-\mathcal{G}R(0), G_i}\rangle= \langle{\mathcal{G}(R-R(0))+ H, G_i}\rangle 
= \langle{(R-R(0), H), (E_i, 0)}\rangle =0;
$$
therefore, $P_{\mathcal{W}}F=\mathcal{G}R(0)$ and $P_{\mathcal{M}\ominus \mathcal {W}}F=T_\Phi Y_1+P_{\mathcal{K}_\Phi}H$, which implies that 
$L_1=T_\Phi^*P_{\mathcal{M}\ominus \mathcal{W}}F=Y_1$ and $P_{\mathcal{K}_{\Phi}}P_{\mathcal{M}\ominus \mathcal{W}}F=P_{\mathcal{K}_\Phi}H.$ Also, $P_{\mathcal{K}_\Phi}H=P_{\mathcal{K}_\Phi}H_0.$ Thus, we conclude that 
$$
P_{\mathcal{W}}F=\mathcal{G}A_0, \ P_{\mathcal{K}_{\Phi}}P_{\mathcal{M}\ominus \mathcal{W}}F=P_{\mathcal{K}_\Phi}H_0. 
$$
 
Additionally, we have $L_1=Y_1\in \mathcal{M}$ and  
$$
L_1=\mathcal{G}T_\Psi^*R+T_\Phi^* H
$$
with $T_\Psi^*R = \sum_{n=0}^\infty A_{n+1}\phi^{n}$ and $T_\Phi^* H=\sum_{n=0}^\infty P_{\mathcal{K}_{\Phi}}(H_{n+1}) \phi^{n}.$ Then, following arguments similar to those used above, we first obtain  
$$
L_1=\mathcal{G}A_1+T_\Phi Y_2+P_{\mathcal{K}_\Phi}(T_\Phi^* H); 
$$
then deduce 
$$
P_{\mathcal{W}}L_1=\mathcal{G}A_1, \ P_{\mathcal{K}_{\Phi}}P_{\mathcal{M}\ominus \mathcal{W}}L_1=P_{\mathcal{K}_\Phi}H_1,
$$ 
and 
$$
L_2=(T_\Phi^*P_{\mathcal{M}\ominus \mathcal{W}})^2 F = T_\Phi^*P_{\mathcal{M}\ominus \mathcal{W}}L_1 = Y_2,
$$
where $$Y_2=\mathcal{G}T_\Psi^{*2}F+T_\Phi^{*2} H$$ with $T_\Psi^{*2}R = \sum_{n=0}^\infty A_{n+2}\phi^{n}$ and $T_\Phi^{*2} H=\sum_{n=0}^\infty P_{\mathcal{K}_{\Phi}}(H_{n+2}) \phi^n.$  

\vspace{.2 cm}

It is now straightforward to see that by continuing the similar steps, we can establish that, for each $n\ge 0$,
$$
P_{\mathcal{W}}L_n=\mathcal{G}A_n \ \ {\rm and}  \ \ P_{\mathcal{K}_\Phi}H_n = P_{\mathcal{K}_{\Phi}}P_{\mathcal{M}\ominus \mathcal{W}}L_n. 
$$
This completes the proof.
\end{proof}

\begin{remark}\label{uni} The outcomes of Remark \ref{uni3} and Proposition \ref{uni1} can be summarized together as follows: given a subspace $\mathcal{M}$ of 
$H^2(\mathbb{D}, \mathbb{C}^m)$, the representation of the subspace $\mathcal{K}$ of $H^2(\mathbb{D}, \mathbb{C}^{p+m})$ obtained in Theorem \ref{Th3.2} is equivalent to having a unitary between $\mathcal{M}$ and $\mathcal{K}$ given by (\ref{rep3}). 
\end{remark}

Further, we want to emphasize that establishing conditions C1, C2, and C3 for elements of $\mathcal{K}$, once we have the unitary, do not require the assumption that $\mathcal{M}$ is invariant under a perturbation of $T_{\Phi} ^*$. Indeed, it follows automatically as asserted in Theorem \ref{Th3.6}. Recall that it is a converse of Theorem \ref{Th3.2}. We note that it is a vector-valued analogue of the converse of Theorem \ref{Th3.1} proved in \cite[Theorem 3.1]{DS}. 

\vspace{.3cm}

\noindent \underline{{\bf Proof of Theorem \ref{Th3.6}}}

\vspace{.2 cm}

Suppose $F \in \mathcal{M}$, then there exists $(R,H)\in \mathcal{K}$ such that $ F=\mathcal{G}R + H$. Now, since $R= \tilde{R} \circ \phi$, for some $\tilde{R}\in H^2(\mathbb{D}, \mathbb{C}^p)$, and $T_\Phi$ is an isometry on $H^2(\mathbb{D}, \mathbb{C}^m)$, therefore we can express $F$ as 
\begin{eqnarray*}
F &=&\mathcal{G}R + H\\
&=&\mathcal{G}R(0) +\mathcal{G} (R-R(0)) +T_{\Phi}^*T_{\Phi}H + P_{\mathcal{K}_\Phi}H\\
&=&\mathcal{G}R(0) + T_{\Phi}(\mathcal{G}T^*_\Psi R +  T^*_\Phi H) + P_{\mathcal{K}_\Phi}H,
\end{eqnarray*}
where $T_\Psi:=T_\phi\otimes I_{\mathbb{C}^p}.$
\noindent This implies that
\begin{eqnarray}\label{e3.10}
    T^*_\Phi F= T^*_\phi (\mathcal{G}R(0)) + \mathcal{G}T^*_\Psi R +  T^*_\Phi H.
\end{eqnarray} 
Further,
\begin{eqnarray*}
 \Big(\sum_{i=1}^{p}T^*_\Phi G_i \otimes G_i\Big)F 
   &=&\sum_{i=1}^{p} \langle{F,G_i}\rangle T^*_\Phi G_i \\
   &=&\sum_{i=1}^{p} \langle \mathcal{G}R+H,\mathcal{G}E_i + 0 \rangle T^*_\Phi G_i\\
 &=& \sum_{i=1}^{p}\langle R, E_i \rangle T^*_\Phi G_i \nonumber \\ 
   &=& \sum_{i=1}^{p}\langle R(0), E_i \rangle T^*_\phi G_i  \\
   &=& T^*_\Phi \Big(\sum_{i=1}^{p} \langle R(0), E_i \rangle G_i \Big)\\
   &=& T^*_\Phi (\mathcal{G}R(0)).  
\end{eqnarray*}
Thus,  
\begin{equation} \label{e3.11}
  \Big(\sum_{i=1}^{p}T^*_\Phi G_i \otimes G_i\Big)F= T^*_\Phi (\mathcal{G}R(0)).
\end{equation}

\noindent Using Equations (\ref{e3.10}) and (\ref{e3.11}), we conclude 
\begin{eqnarray*}
    \Big(T^*_\Phi-\sum_{i=1}^{p}T^*_\Phi G_i \otimes G_i\Big)F &=& T^*_\Phi F - \Big(\sum_{i=1}^{p}T^*_\Phi G_i \otimes G_i\Big)F\\
    &=& T^*_\Phi (\mathcal{G}R(0))+ \mathcal{G}T^*_\Psi R +  T^*_\Phi H - T^*_\Phi (\mathcal{G}R(0)).\\
    &=& \mathcal{G}T^*_\Psi R +  T^*_\Phi H, 
\end{eqnarray*}
which belongs to $\mathcal{M}$, as $(R, H)\in \mathcal{K}$ and $\mathcal{K}$ is invariant under $T^*_\phi \otimes I_{\mathbb{C}^{p+m}}=T_{\Psi}^*\oplus T_{\Phi}^*$. This completes the proof.
\qed

\vspace{.2 cm}

We shall now present the proofs of Theorems \ref{Th3.7}, \ref{Th3.10} and \ref{Th3.13}. Recall that Theorem \ref{Th3.7} describes invariant subspaces of some specific finite-rank perturbations of certain Toeplitz operators on $H^2(\mathbb{D},\mathbb{C}^m)$. We note that it is a vector-valued extension of a characterization of invariant subspaces of certain finite-rank perturbations of $T_z$ on $H^2(\mathbb{D})$ given by Das and Sarkar in \cite[Theorems 4.1 \& 4.2]{DS}.

\vspace{.3 cm} 

\noindent\underline{{\bf Proof of Theorem \ref{Th3.7}}} 

\vspace{.2 cm} 

For the forward implication, we observe that $\mathcal{M}^{\perp}$ is invariant under  $T^*_\Phi-\sum_{i=1}^{k} U_i \otimes V_i$. Therefore, by Theorem \ref{Th3.2},  there exists a $(T^*_\phi \otimes I_{\mathbb{C}^{p+m}})$-invariant subspace $\mathcal{K}$ of $H^2(\mathbb{D},\mathbb{C}^{p+m})$ such that
\begin{eqnarray*}
    \mathcal{M}^{\perp}=[\mathcal{G}, I_m]\mathcal{K},
\end{eqnarray*}
along with a unitary  $\cl U: \mathcal{K} \longrightarrow \mathcal{M}^\perp$ given by 
 \begin{equation*}
      \cl U
      \begin{pmatrix}
      R\\
      H
      \end{pmatrix}=\mathcal{G}R+H,
 \end{equation*}
Set $\mathcal{N}:=\mathcal{K}^{\perp}$. Then
$\mathcal{N}$ is $(T_\phi \otimes I_{\mathbb{C}^{p+m}})$-invariant subspace of $H^2(\mathbb{D},\mathbb{C}^{p+m})$. Recall from the proof of Theorem \ref{Th3.2} that 
$\mathcal{N}^\perp=\mathcal{K}\subset \{(\tilde{R}\circ \phi, H): \tilde{R}\in H^2(\mathbb{D}, \mathbb{C}^p), \ H\in H^2(\mathbb{D}, \mathbb{C}^m)\}$. This completes the proof of the forward implication. The converse follows straightforwardly using Theorem \ref{Th3.6}.

For the moreover part, assume that $\{F_i\}_{i=1}^{p}\subset H^{\infty}(\mathbb{D},\mathbb{C}^m)$. Note that any $G \in \mathcal{M}^\perp$ can be expressed as
\begin{eqnarray*}
  G &=&\mathcal{G}R+H\\
    &=&\sum_{i=1}^{p}f_{i}F_i+ H,    
\end{eqnarray*}
where $(R,H)\in \mathcal{N}^\perp$ with $R=(f_1, \cdots, f_p), \ f_i\in H^2(\mathbb{D})$. Let 
$F \in \mathcal{M}$. Then, for $G\in \cl M^\perp$,  
\begin{eqnarray*}
0=\langle F,G \rangle&= &\langle F,\sum_{i=1}^{p}f_iF_i+ H \rangle \\
&=& \langle F,\sum_{i=1}^{p}f_iF_i \rangle + \langle F, H\rangle\\
&=& \sum_{i=1}^{p}\langle T^*_{F_i}F,f_i \rangle + \langle F, H \rangle\\
&=& \left\langle{ (T^*_{F_1}F,\dots,T^*_ {F_p}F,F), (R, H)}\right\rangle.
\end{eqnarray*}
Thus, we established that  
$$F\in \mathcal{M} \ \ {\rm if \ and \ only \ if} \ \ (T^*_{F_1}F,\dots,T^*_ {F_p}F,F) \in \mathcal{N}.$$ 
Hence, 
\begin{eqnarray*}
\mathcal{M}=\Big\{F \in H^2(\mathbb{D},\mathbb{C}^m): (T^*_{F_1}F,\dots, T^*_ {F_p}F,F) \in \mathcal{N}\Big\}.  
\end{eqnarray*}
\qed

\vspace{.2 cm}

We shall now prove Theorem \ref{Th3.10} that describes almost invariant subspaces of certain Toeplitz operators on the vector-valued Hardy spaces  $H^2(\mathbb{D},\mathbb{C}^m)$.  
 
The study and classification of almost invariant subspaces of the backward shift operator on the Hardy space $H^2(\mathbb{D})$ was initially carried out by Chalendar, Gallardo-Guti$\rm{\acute{e}}$rrez, and Partington in \cite[Corollary 3.4]{CGP}. Later, \cite[Corollary 3.6]{CDP1} extended this work to the vector-valued setting. Both these works, for their respective settings, first characterized nearly invariant subspaces with finite defect of the backward shift operator and then used it to deduce the representations of the almost invariant subspaces. 
Recently, Das and Sarkar \cite{DS} have taken a different route to derive their description of almost invariant subspaces of the backward shift operator on $H^2(\bb D).$ Indeed, they deduce their result (\cite[Corollary 5.2]{DS}) from Theorem \ref{Th3.1} by using Lemma \ref{Lem3.6} that establishes a connection of almost $T_\phi^*$-invariant subspaces with invariant subspaces of finite-rank perturbations of $T_\phi^*$.  

It is worth emphasizing that each of these approaches, with their unique mathematical foundations, has sparked new lines of investigation and propelled further advancements in the discipline.  Our proof of Theorem \ref{Th3.10} is more aligned with the techniques used by Das and Sarkar in \cite{DS}.  We note that it extends Das and Sarkar's result to vector-valued Hardy spaces $ H^2(\mathbb{D},\mathbb{C}^m)$.

\begin{lemma}\cite[Theorem 5.1]{DS}\label{Lem3.6} Let $T$ be a bounded operator on a Hilbert space $\mathcal{H}$, and let  
$\{u_i\}_{i=1}^{k}$ and $\{v_i\}_{i=1}^{k}$ be two finite sets in $\mathcal{H}$. If a subspace $\mathcal{M}\subset \mathcal{H}$ is invariant under $T -\sum_{i=1}^{k}v_i \otimes u_i$, then $\mathcal{M}$ is almost invariant under $T$ with defect at most $k$. Conversely, if $\mathcal{M}$ is almost invariant under $T$ with defect $k$, then $\mathcal{M}$ is invariant under $T -\sum_{i=1}^{k}f_i \otimes T^*f_i$ for every orthonormal basis $\{f_i\}_{i=1}^{k}$ of the defect space of $\mathcal{M}$.   
\end{lemma}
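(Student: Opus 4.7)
The plan is to handle both directions by direct operator-theoretic computation, using only the definition of rank-one operators $v \otimes u$ acting as $(v\otimes u)(x) = \langle x, u\rangle v$. The key algebraic identity, valid for every $x \in \mathcal{H}$, is
$$
Tx \;=\; \Big(T - \sum_{i=1}^{k} v_i \otimes u_i\Big)x \;+\; \sum_{i=1}^{k} \langle x, u_i\rangle\, v_i.
$$
Both implications will fall out of this decomposition once one tracks where each summand lies.

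For the forward direction, assume $\mathcal{M}$ is invariant under $T - \sum_{i=1}^{k} v_i \otimes u_i$. For $x \in \mathcal{M}$, the first summand on the right sits in $\mathcal{M}$, and the second lies in $\mathrm{span}\{v_1,\dots,v_k\}$. Splitting each $v_i = P_{\mathcal{M}}v_i + P_{\mathcal{M}^\perp}v_i$ rewrites the second summand as an element of $\mathcal{M}$ plus an element of $\mathcal{F} := P_{\mathcal{M}^\perp}\mathrm{span}\{v_1,\dots,v_k\}$. Since $\mathcal{F} \perp \mathcal{M}$ and $\dim \mathcal{F} \le k$, one obtains $T\mathcal{M} \subseteq \mathcal{M} \oplus \mathcal{F}$, which is exactly the almost-invariance condition with defect at most $k$.

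For the converse, suppose $\mathcal{M}$ is almost $T$-invariant with defect $k$ and defect space $\mathcal{F}$, and let $\{f_1,\dots,f_k\}$ be an orthonormal basis of $\mathcal{F}$. For each $x \in \mathcal{M}$, the hypothesis $Tx \in \mathcal{M} \oplus \mathcal{F}$ together with $\mathcal{F} \perp \mathcal{M}$ gives
$$
P_\mathcal{F} Tx \;=\; \sum_{i=1}^{k} \langle Tx, f_i\rangle f_i \;=\; \sum_{i=1}^{k} \langle x, T^* f_i\rangle f_i \;=\; \Big(\sum_{i=1}^{k} f_i \otimes T^* f_i\Big)x.
$$
Consequently $\big(T - \sum_{i=1}^{k} f_i \otimes T^*f_i\big)x = Tx - P_\mathcal{F} Tx \in \mathcal{M}$, establishing invariance under the stated perturbation.

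There is no real obstacle to this proof: each direction is a short manipulation. The only points requiring a little care are (i) projecting the $v_i$ onto $\mathcal{M}^\perp$ in the forward direction so that the candidate defect space is genuinely orthogonal to $\mathcal{M}$, and (ii) noticing in the converse that the orthogonal projection onto $\mathcal{F}$ coincides with $\sum_i f_i \otimes T^* f_i$ when restricted to $\mathcal{M}$, thanks to the identity $\langle Tx, f_i\rangle = \langle x, T^*f_i\rangle$. No deeper function-theoretic input (such as Wold decomposition or properties of inner functions) is needed here, which is why this lemma serves cleanly as the bridge from the perturbation-theoretic Theorem \ref{Th3.2} to the almost-invariance result in Theorem \ref{Th3.10}.
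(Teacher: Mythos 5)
Your proof is correct. The paper itself gives no proof of this lemma --- it is imported verbatim from \cite{DS} (Theorem 5.1 there), with the authors remarking only that it ``follows using some straightforward operator-theoretic arguments'' --- and your two computations (projecting the $v_i$ onto $\mathcal{M}^\perp$ to build the defect space, and identifying $P_{\mathcal{F}}T|_{\mathcal{M}}$ with $\sum_i f_i\otimes T^*f_i$ via $\langle Tx,f_i\rangle=\langle x,T^*f_i\rangle$) are exactly those arguments, matching the rank-one convention $(v\otimes u)x=\langle x,u\rangle v$ used throughout the paper.
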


\vspace{.3 cm}

\noindent{\underline{\bf Proof of Theorem \ref{Th3.10}}}

\vspace{.2 cm}

Suppose $\mathcal{M}\subset H^2(\mathbb{D}, \mathbb{C}^m)$ is an almost $T^*_\Phi$-invariant subspace. Then using Lemma \ref{Lem3.6}, $\mathcal{M}$ is invariant under $T^*_\Phi - \sum_{i=1}^{n}F_i \otimes T_\Phi F_i$, where $\{F_i: i=1,2, \dots, n\}$ is an orthonormal basis of the defect space. Thus, by applying Theorem \ref{Th3.2}, there exist a non-negative integer $p$ and a $(T^*_\phi \otimes I_{\mathbb{C}^{p+m}})$-invariant subspace $\mathcal{K}\subset H^2(\mathbb{D},\mathbb{C}^{p+m})$ such that $$\mathcal{M}=[\mathcal{G}, I_m]\mathcal{K},$$ where $\mathcal{G}$ is the matrix with $p$ columns that form an orthonormal basis of the subspace $span\{P_\mathcal{M} T_\Phi F_i: i=1,2, \dots, n\}$,  and $I_m$ is the identity operator on $H^2(\bb D, \bb C^m)$. Furthermore, for each $F = \mathcal{G}R + H \in \mathcal{M}$,
\begin{eqnarray*}
  \|F\|^2=\|R\|^2 + \|H\|^2, \text{ where } (R,H)\in \mathcal{K}.  
\end{eqnarray*}
For the converse part, let $\mathcal{M}\subset H^2(\mathbb{D},\mathbb{C}^m)$ have the representation as given in the statement of Theorem \ref{Th3.10}. Then using Theorem \ref{Th3.6}, $\mathcal{M}$ is invariant under $T^*_\Phi-\sum_{i=1}^{p}T^*_\Phi F_i \otimes F_i$. Thus, by applying Lemma \ref{Lem3.6}, $\mathcal{M}$ is almost $T^*_\Phi$-invariant with defect atmost $p$.     

\qed

\vspace{.2 cm}

Next, we move on to the poof of Theorem \ref{Th3.13}. Recall that Theorem \ref{Th3.13} describes nearly invariant subspaces of $T_{B,B^{'}}^*$-invariant subspaces of $H^2(\bb D, \bb C^m).$ Motivated by the notion of nearly $T^*_z$-invariant subspaces of $H^2(\mathbb{D})$, in \cite{DS}, the authors introduced and studied the concept of $T^*_{z,B}$-invariant subspaces of $H^2(\mathbb{D})$. Clearly, our Theorem \ref{Th3.13} extends their study to a more general setting in vector-valued Hardy spaces $H^2(\mathbb{D},\mathbb{C}^m)$. We first recall the definition of a nealy $T^*_{\phi, \psi}$-invariant subspace of $H^2(\bb D, \bb C^m)$ and give a Lemma that records an observation essential for our proof. 

\begin{definition} Suppose $\phi$ and $\psi$ are two inner functions in $H^2(\mathbb{D})$. Then a non-zero subspace $\mathcal{M}\subset H^2(\mathbb{D},\mathbb{C}^m)$ is called nearly $T^*_{\phi, \psi}$-invariant if   
 \begin{eqnarray*}
  T^*_\Phi(\mathcal{M}\cap T_\Psi H^2(\mathbb{D},\mathbb{C}^m))\subset \mathcal{M},
\end{eqnarray*}
where $T_\Phi:=T_{\phi} \otimes I_{\mathbb{C}^m}$, $T_{\Psi}:=T_{\psi} \otimes I_{\mathbb{C}^m}$.
\end{definition}

Consider a finite Blaschke product in $H^2(\mathbb{D})$ as follows:  $$
B(z)=\prod_{i=1}^{l}\frac{z-w_i}{1-\overline{w_i}z}, \text{ where } \{w_i\}_{i=1}^{l} \subset \mathbb{D}.
$$ 
It is easy to observe that the operator $T_{\mathcal{B}}:=T_B \otimes  I_{\mathbb{C}^m}$ is an isometry on $H^2(\mathbb{D},\mathbb{C}^m)$. Using the identification of $H^2(\mathbb{D},\mathbb{C}^m)$ with $H^2(\mathbb{D})\otimes \mathbb{C}^m$, we have
\begin{eqnarray*}
H^2(\mathbb{D},\mathbb{C}^m) \ominus T_\mathcal{B} H^2(\mathbb{D},\mathbb{C}^m)
&=& H^2(\mathbb{D})\otimes \mathbb{C}^m \ominus T_BH^2(\mathbb{D})\otimes \mathbb{C}^m\\
&=&\mathcal{K}_{B} \otimes \mathbb{C}^m, \text{ where } \mathcal{K}_{B}=H^2(\mathbb{D})\ominus T_BH^2(\mathbb{D}). 
\end{eqnarray*}

This implies that the dimension of $ H^2(\mathbb{D},\mathbb{C}^m) \ominus T_\mathcal{B} H^2(\mathbb{D},\mathbb{C}^m)$ is $lm$, as the dimension of the model space $ \mathcal{K}_B$ is $l$. 

\begin{lemma}\label{Lem3.9}
 Suppose $B(z)=\prod_{i=1}^{l}\frac{z-w_i}{1-\overline{w_i}z}$ is a finite Blaschke product, 
 where $w_1, \dots, w_l\in \mathbb{D}$. Define the operator $T_\mathcal{B}:=T_B \otimes I_{\mathbb{C}^m} $. Then for a subspace $\mathcal{M}\subset H^2(\mathbb{D},\mathbb{C}^m)$, the dimension of  $\mathcal{M}\ominus(\mathcal{M}\cap T_\mathcal{B} H^2(\mathbb{D},\mathbb{C}^m))$ is 
 at most $lm$.    
\end{lemma}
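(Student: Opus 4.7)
The plan is to show that the orthogonal projection
$$P : H^2(\mathbb{D},\mathbb{C}^m)\;\longrightarrow\; H^2(\mathbb{D},\mathbb{C}^m)\ominus T_{\mathcal{B}}H^2(\mathbb{D},\mathbb{C}^m),$$
when restricted to the subspace $\mathcal{M}\ominus(\mathcal{M}\cap T_{\mathcal{B}}H^2(\mathbb{D},\mathbb{C}^m))$, is injective. Once injectivity is in hand, the conclusion is immediate: the image lies inside a space of dimension $lm$ (as the paragraph immediately preceding the lemma already records, using the tensor-product identification $H^2(\mathbb{D},\mathbb{C}^m)\cong H^2(\mathbb{D})\otimes\mathbb{C}^m$ together with $\dim\mathcal{K}_B=l$), so the domain has dimension at most $lm$.

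For the injectivity step, suppose $F\in\mathcal{M}\ominus(\mathcal{M}\cap T_{\mathcal{B}}H^2(\mathbb{D},\mathbb{C}^m))$ satisfies $PF=0$. Then $F\in T_{\mathcal{B}}H^2(\mathbb{D},\mathbb{C}^m)$ because $P$ is precisely the projection onto the orthogonal complement of this range. On the other hand, $F\in\mathcal{M}$ by assumption, so $F\in\mathcal{M}\cap T_{\mathcal{B}}H^2(\mathbb{D},\mathbb{C}^m)$. But $F$ is also orthogonal to $\mathcal{M}\cap T_{\mathcal{B}}H^2(\mathbb{D},\mathbb{C}^m)$ by our choice of $F$, forcing $F=0$.

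Putting these two observations together gives
$$\dim\bigl(\mathcal{M}\ominus(\mathcal{M}\cap T_{\mathcal{B}}H^2(\mathbb{D},\mathbb{C}^m))\bigr)\;\le\;\dim\bigl(H^2(\mathbb{D},\mathbb{C}^m)\ominus T_{\mathcal{B}}H^2(\mathbb{D},\mathbb{C}^m)\bigr)\;=\;lm,$$
which is the desired bound. Honestly, I expect no serious obstacle here: the only ingredient beyond Hilbert-space bookkeeping is the computation of $\dim(H^2(\mathbb{D},\mathbb{C}^m)\ominus T_{\mathcal{B}}H^2(\mathbb{D},\mathbb{C}^m))=lm$, which is already spelled out in the discussion preceding the lemma via the identification with $\mathcal{K}_B\otimes\mathbb{C}^m$ and the well-known fact that the model space of a length-$l$ finite Blaschke product is $l$-dimensional. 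The rest is the standard principle that an orthogonal projection restricts injectively to the orthogonal complement of its kernel intersected with any given subspace.
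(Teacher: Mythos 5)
Your argument is correct and is essentially the dual of the paper's: the paper projects an orthonormal basis $\{F_i\}$ of $\mathcal{K}_{\mathcal{B}}=H^2(\mathbb{D},\mathbb{C}^m)\ominus T_{\mathcal{B}}H^2(\mathbb{D},\mathbb{C}^m)$ into $\mathcal{M}$ and shows the resulting vectors span $\mathcal{M}\ominus(\mathcal{M}\cap T_{\mathcal{B}}H^2(\mathbb{D},\mathbb{C}^m))$, whereas you show the projection in the opposite direction is injective on that subspace; both hinge on the same orthogonality observation, so the approaches are essentially the same.
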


\begin{proof}
Suppose $\{F_i\}_{i=1}^{r}$, $1 \le r \le lm$, is an orthonormal basis of the model space 
$\mathcal{K}_{\mathcal{B}}:=H^2(\mathbb{D},\mathbb{C}^m) \ominus T_\mathcal{B} H^2(\mathbb{D},\mathbb{C}^m)$. Let $\mathcal{F}$ denote the linear span of $\{P_\mathcal{M}F_i : i=1,2, \dots, r\}$. Then $\mathcal{F}\subset \mathcal{M}$. Also, for each $F \in \mathcal{M}\cap T_\mathcal{B} H^2(\mathbb{D},\mathbb{C}^m)$  and $1\le i\le r,$
\begin{eqnarray*}
    \langle P_\mathcal{M}F_i, F \rangle=\langle F_i,F \rangle=0, 
\end{eqnarray*}
which means $ \mathcal{F}\subset \mathcal{M}\ominus(\mathcal{M}\cap T_\mathcal{B} H^2(\mathbb{D},\mathbb{C}^m))$. To show the equality, let $F\in  \mathcal{M}\ominus(\mathcal{M}\cap T_\mathcal{B} H^2(\mathbb{D},\mathbb{C}^m))$ such that $\langle F, P_\mathcal{M}F_i \rangle=0$, for each $i$. This implies that  $\langle F,F_i \rangle=0$, which further implies that 
$F\in \mathcal{M}\cap T_\mathcal{B} H^2(\mathbb{D},\mathbb{C}^m)$. This shows that $F=0$. Hence, 
$\mathcal{M}\ominus(\mathcal{M}\cap T_\mathcal{B} H^2(\mathbb{D},\mathbb{C}^m))=\cl F$, which implies that the dimension of $\mathcal{M}\ominus(\mathcal{M}\cap T_\mathcal{B} H^2(\mathbb{D},\mathbb{C}^m))$ is at most $r$. This completes the proof.
\end{proof}

\noindent\underline{{\bf Proof of Theorem \ref{Th3.13}}}

\vspace{.2 cm}

Let $\mathcal{M}$ be a non-zero subspace of $H^2(\mathbb{D},\mathbb{C}^m)$. 
To prove the forward implication, let $\mathcal{M}$ be nearly invariant under $T^*_{B, B^{'}}$. By Lemma \ref{Lem3.9}, the subspace $\mathcal{W}:=\mathcal{M}\ominus (\mathcal{M}\cap T_\mathcal{B^{'}} H^2(\mathbb{D},\mathbb{C}^m))$ is finite dimensional. Let $\{G_1, \dots, G_p\}$ be an orthonormal basis of $\mathcal{W}.$ Now, since $\mathcal{M}$ is nearly  $T^*_{B,B^{'}}$-invariant, therefore  $T^*_{\mathcal{B}}(\mathcal{M}\cap T_\mathcal{B^{'}} H^2(\mathbb{D},\mathbb{C}^m))\subset \mathcal{M}$, where $T_{\mathcal{B}}:= T_B \otimes I_{\mathbb{C}^m}$. Equivalently, we can write
\begin{eqnarray*}
    T^*_{\mathcal{B}}\Big(I-\sum_{i=1}^{p}G_i \otimes G_i\Big)\mathcal{M}\subset \mathcal{M},
\end{eqnarray*}
that is,
\begin{eqnarray*}
     \Big(T^*_{\mathcal{B}}-\sum_{i=1}^{p} T^*_\mathcal{B} G_i \otimes G_i\Big)\mathcal{M}\subset \mathcal{M}.
\end{eqnarray*} 
This shows that $\mathcal{M}$ is invariant under $T^*_{\mathcal{B}}-\sum_{i=1}^{p} T^*_\mathcal{B} G_i \otimes G_i$. Therefore, using Theorem \ref{Th3.2}, there exists a $( T^*_B \otimes I_{\mathbb{C}^{p+m}} )$ -invariant subspace $\mathcal{N}$ of $H^2(\mathbb{D},\mathbb{C}^{p+m})$ such that  
\begin{eqnarray*}
    \mathcal{M}=[\mathcal{G}, I_m]\mathcal{N},
\end{eqnarray*}
where $\mathcal{G}$ is the matrix with columns $G_1, \dots, G_p$. Additionally, as noted in Remark \ref{uni3},  
for $F= \mathcal{G}R + H \in \mathcal{M}$, we have 
\begin{eqnarray*}
    \|F\|^2=\|R\|^2 + \|H\|^2,
\end{eqnarray*} 
and the vector-valued functions $R$ and $H$ are given by
\begin{eqnarray*}
R(z)=\sum_{n=0}^{\infty}  A_n B^n \ \text{and} \ H=\sum_{n=1}^{\infty} (P_{\mathcal{K}_{\mathcal{B}}}H_{n})B^{n-1}, 
\end{eqnarray*}
where $A_n \in \mathbb{C}^p$ and $H_n=P_{\mathcal{M}\ominus \mathcal{W}}L_n,$ for some $L_n \in \mathcal{M}$. Since $H_n \in \cl M\ominus \cl W= \mathcal{M}\cap T_\mathcal{B^{'}} H^2(\mathbb{D},\mathbb{C}^m)$ and $B$ divides $B^{'}$, therefore $H_n \in  \mathcal{M}\cap T_\mathcal{B} H^2(\mathbb{D},\mathbb{C}^m)$.
This shows that $H=0$. Thus we conclude $\mathcal{N}\subset H^2(\mathbb{D},\mathbb{C}^p)$, and it is invariant under $(T^*_B \otimes I_{\mathbb{C}^p})$, which completes the forward implication.

To prove the converse, let $F \in \mathcal{M}\cap T_{\mathcal{B^{'}}} H^2(\mathbb{D},\mathbb{C}^m)$, and let $F=\mathcal{G}R$ for some $R\in \mathcal{N}$. Then,
\begin{eqnarray*}
   0&=&\langle F, G_n \rangle\\
   &=& \langle \mathcal{G}R,\cl U E_n \rangle \\
   &=& \langle R,E_n \rangle.
\end{eqnarray*}
This shows that $R(0)=0$. Then, $R=\sum_{n=1}^\infty A_n B^n$; therefore, 
$$
F = T_{\mathcal{B}}\Big(\mathcal{G}(T^*_B \otimes I_{\mathbb{C}^p})R\Big),
$$
which yields $T^*_\mathcal{B}F=\mathcal{G}(T^*_B \otimes I_{\mathbb{C}^p})R\in \mathcal{G}\mathcal{N}=\mathcal{M}$.
This establishes that $\mathcal{M}$ is nearly $T^*_{B,B^{'}}$-invariant; hence, completes the proof.    
\qed

\vspace{.2 cm}

\begin{remark}\label{das1} 
 In \cite{DS}, Das and Sarkar describes nearly $T^*_{z,B}$-invariant subspaces of $H^2(\mathbb{D})$ \cite[Theorem 6.2]{DS}. It follows as a corollary to Theorem \ref{Th3.13} by taking $m=1$ and $B=z$.
\end{remark}

\section{Proofs of Theorems \ref{Th4.2} \& \ref{Th4.4}, and their general cases}\label{Sec4}
In this section, we shall prove Theorems \ref{Th4.2} and \ref{Th4.4} that together characterize nearly $T_{\phi, \psi}^*$-invariant subspaces of $H^2(\bb D, \bb C^m)$ with defect one. We also give the corresponding results for the general case of finite defect $n$.  

In 2020, Chalendar, Gallardo-Guti$\rm{\acute{e}}$rrez, and Partington in \cite{CGP}, introduced and studied the concept of nearly invariant subspaces with finite defect of the backward shift operator on $H^2(\mathbb{D})$. Taking a cue from this, we extend our Theorem \ref{Th3.13}, and thereby the generalization of nearly invariant subspaces due to Das and Sarkar (Remark \ref{das1}), to the finite defect setting. We first recall the definition of a nearly $T_{\phi,\psi}^*$-invariant subspace with finite defect.  

\begin{definition} Let $\phi$ and $\psi$ be two inner functions in $H^2(\mathbb{D})$. Then a non-zero subspace $\mathcal{M}$ of $H^2(\mathbb{D},\mathbb{C}^m)$ is said to be a nearly $T^*_{\phi, \psi}$-invariant with finite defect $n$ if there exists an $n$-dimensional subspace $\mathcal{F}$ (orthogonal to $\mathcal{M}$) such that 
$$
T^*_{\Phi}\Big(\mathcal{M}\cap T_{\Psi} H^2(\mathbb{D},\mathbb{C}^m)\Big) \subseteq \mathcal{M} \oplus \mathcal{F},
$$ 
where $T_{\Phi}:= T_{\phi}\otimes I_{\mathbb{C}^m}$ and $T_{\Psi}:= T_{\psi}\otimes I_{\mathbb{C}^m}$.
\end{definition}

Note that the notion of a nearly $T^*_{\phi, \psi}$-invariant subspace is a particular case of a nearly $T^*_{\phi, \psi}$-invariant subspace with finite defect. 

\vspace{.3 cm}

\noindent\underline{{\bf Proof of Theorem \ref{Th4.2}}} 

\vspace{.2 cm}

Let 
$$
T^*_{\cl B}\Big(\mathcal{M}\cap T_{\cl B^{'}} H^2(\mathbb{D},\mathbb{C}^m)\Big) \subseteq \mathcal{M} \oplus \mathcal{F},
$$
where $T_{\cl B}:=T_B\otimes I_{\bb C^m}, \ T_{\cl B^{'}}:=T_{B^{'}}\otimes I_{\bb C^m}\ $, and $\cl F$ is the defect space with dimension $1$ and orthonormal basis $\{J\}$.  

First, we consider the case when $\mathcal{M}\not\subset T_{\mathcal{B^{'}}} H^2(\mathbb{D},\mathbb{C}^m)$. Then $\mathcal{W}:=\mathcal{M}\ominus (\mathcal{M}\cap T_{\mathcal{B^{'}}} H^2(\mathbb{D},\mathbb{C}^m)) \neq \{0\}$. Let the dimension of $\mathcal{W}$ be $p$, and let $\{G_1, \dots, G_p\}$ be an orthonormal basis of $\mathcal{W}.$ We decompose $\mathcal{M}$ as 
$$
\mathcal{M}= \mathcal{W} \oplus (\mathcal{M}\ominus \mathcal{W}).
$$
Let $F \in \mathcal{M}$, then we can write it as 
\begin{eqnarray*}
F &=& \sum_{i=1}^{p}a_{0i}G_i  + P_{\mathcal{M}\ominus \mathcal{W}}F\\
&=& \mathcal{G} A_0 + P_{\mathcal{M}\ominus \mathcal{W}}F, 
\end{eqnarray*}
where $A_0=(a_{01}, \dots,a_{0p})^T \in \mathbb{C}^p$ and $\mathcal{G}=[G_1, \dots, G_p].$ Now, since $P_{\mathcal{M}\ominus \mathcal{W}}F \in \mathcal{M}\cap T_{\mathcal{B^{'}}} H^2(\mathbb{D},\mathbb{C}^m)$, therefore 
$T^*_{\mathcal{B}}( P_{\mathcal{M}\ominus \mathcal{W}}F)\in \mathcal{M}\oplus \mathcal{F}$. This 
implies that $T^*_{\mathcal{B}}( P_{\mathcal{M}\ominus \mathcal{W}}F)= L_1 + \alpha_1 J$ for some $L_1 \in \mathcal{M}$ and $\alpha_1\in \bb C$ , which further implies that $ P_{\mathcal{M}\ominus \mathcal{W}}F= B L_1 + B\alpha_1J$. Thus, we obtain
\begin{eqnarray}\label{e4.1}
    F = \mathcal{G}A_0 + B L_1 + B\alpha_1 J.
\end{eqnarray}
Taking the norms in the above equation, we get
\begin{eqnarray*}
    \|F\|^2=\|A_0\|^2 + \|L_1\|^2 + |\alpha_1|^2.
\end{eqnarray*}

Again $L_1 \in \mathcal{M}$; therefore, following the arguments similar to those used above, we get
\begin{eqnarray*}
    L_1=\mathcal{G}A_1 + B L_2 + B\alpha_2 J
\end{eqnarray*}
with
$$
\|L_1\|^2=\|A_1\|^2 + \|L_2\|^2 + |\alpha_2|^2,
$$
where $A_1\in \bb C^p, \ \alpha_2\in \bb C$, and $L_2\in \cl M$ such that $T^*_{\mathcal{B}}( P_{\mathcal{M}\ominus \mathcal{W}}L_1)= L_2 + \alpha_2 J$. Using this in Equation (\ref{e4.1}), we obtain
$$
F=\mathcal{G}(A_0 + A_1B) + B^2 L_2 +B (\alpha_1  + B \alpha_2 )J.
$$

\noindent Continuing this same process, we get that for each $n \in \mathbb{N}$,   
\begin{eqnarray}
    F =\mathcal{G}\Bigg(\sum_{i=0}^{n-1}A_i B^i\Bigg) + B^{n} L_{n} + B\Bigg(\sum_{i=1}^{n}B^{i-1}\alpha_i\Bigg)J.
\end{eqnarray}
and 
\begin{equation}\label{fin1}
    \|F\|^2=\sum_{i=0}^{n-1}\|A_i\|^2 + \|L_{n}\|^2 + \sum_{i=1}^{n}|\alpha_i|^2. 
\end{equation}
Suppose $P_1$ and $P_2$ are projections of $H^2(\mathbb{D}, \mathbb{C}^m)$ onto the orthogonal complements of $\cl F$ and $\mathcal{W}$, respectively. Then, for any $G\in \cl M, \ P_2G=P_{\cl M\ominus \cl W}G$; therefore, we have that $L_{n}=P_1T^*_{\mathcal{B}}P_2L_{n-1}$. Since $T_\mathcal{B}$ is a $C._{0}$ contraction, therefore by using Lemma \ref{Lem2.2}, $T_{\mathcal{B}}P_1P_2$ is a $C._{0}$ contraction. Thus, 
\begin{eqnarray*}
  \|L_n\|&=& \|P_1T^*_{\mathcal{B}}P_2L_{n-1}\| \\ 
  &=& \|(P_1T^*_{\mathcal{B}}P_2)^{n-1}L_1\| \\
  &=& \|P_1T_{\mathcal{B}}^*(P_2P_1T_{\mathcal{B}}^*)^{n-2}P_2L_1\| \\
  &=& \|P_1T_{\mathcal{B}}^*((T_{\mathcal{B}}P_1P_2)^*)^{n-2}P_2L_1\| \\
  &\le & \|T^*_{\mathcal{B}}\| \|((T_{\mathcal{B}}P_1P_2)^*)^{n-2}P_2L_1\| \\
  & \longrightarrow & 0 \text{ as } n \longrightarrow \infty.
\end{eqnarray*}

Since $\{A_i\}_{i=0}^\infty$ and $\{\alpha_i\}_{i=0}^\infty$ are both square summable, therefore, using the arguments similar to the ones used in the proof of Theorem \ref{Th3.2}, we conclude that   
$$
R:=\sum_{n=0}^{\infty}  A_n B^n \ \ \text{and} \ \ h:=\sum_{n=1}^{\infty}\alpha_n B^{n-1} .
$$
are well-defined  functions in $H^2(\bb D, \bb C^p)$ and $H^2(\bb D),$ respectively. Now, considering the $n^{th}$ partial sums
$$
R_n=\sum_{i=0}^{n}A_iB^i \quad \text{ and } \quad h_n=\sum_{i=1}^{n}\alpha_iB^{i-1},
$$
and using the fact that convergence in $H^2(\bb D, \bb C^p)$ and $H^2(\bb D)$ implies pointwise convergence, along with information that $L_n\to 0$, we obtain  
\begin{eqnarray}\label{e4.3}
F=\mathcal{G}R + BhJ.
\end{eqnarray}
Additionally, Equation (\ref{fin1}) yields 
\begin{eqnarray}\label{e4.4}
    \|F\|^2=\|R\|^2 +\|h\|^2.
\end{eqnarray}

We note that Equation (\ref{e4.3}) satisfies the following conditions:
\begin{enumerate}
\item[D1.] $F=\sum_{n=0}^{\infty}  A_n B^n$, $P_\mathcal{W}L_n=\mathcal{G}A_n$, where $P_\mathcal{W}$ is the projection of $H^2(\mathbb{D},\mathbb{C}^m)$ onto $\mathcal{W}$ and  $L_n=(P_1T^*_{\mathcal{B}}P_2)^nF$ for all $n \ge 0$.
\item[D2.] $h=\sum_{n=1}^{\infty}\alpha_nB^{n-1}$.
\item[D3.]  $\|F\|^2=\|R\|^2 + \|h\|^2$.
\end{enumerate}

Now, following the arguments similar to those used in the proof of Theorem \ref{Th3.2}, we show that the representation of $F$ in Equation (\ref{e4.3}), subjected to conditions D1, D2, and D3, is unique. Furthermore, the set $\mathcal{K}\subset H^2(\mathbb{D},\mathbb{C}^{p+1})$ defined by
\begin{equation}\label{rep-def1}
    \mathcal{K}=\big\{(R,h): F=\mathcal{G}R+BhJ\in \mathcal{M}, \ {\rm and} \ F, R, h \ {\rm satisfy} \ D1, D2, D3\big\}
\end{equation} 
is a closed vector subspace of $H^2(\bb D, \bb C^{p+1})$ that is invariant under $T^*_B \otimes I_{\mathbb{C}^{p+1}}$. 

\vspace{.2 cm}

The desired conclusion for the case $\mathcal{M}\subset T_{\mathcal{B^{'}}} H^2(\mathbb{D},\mathbb{C}^m)$ can easily be established by following the arguments similar to those used for the previous case, along with fact that for this case $\cl W =\{0\}$.  

Lastly, note that in this case, the subspace $\cl K$ of $H^2(\bb D)$ that appears in the representation is given by 
\begin{equation}\label{rep-def2}
\mathcal{K}=\{h: F=BhJ\in \mathcal{M}, \ h=f\circ B  \ \text{for some} \ f\in H^2(\bb D) \ {\rm and} \ ||F||=||h||\big\}.
\end{equation}
\qed

\begin{remark}
Similar to Theorem \ref{Th3.2}, the proof of Theorem \ref{Th4.2} also precisely describes the subspace $\cl K$, by Equation (\ref{rep-def1}) for case (\textit{i}) and Equation (\ref{rep-def2}) for case (\text{ii}), that appears in the representation of $\cl M$ given by Theorem \ref{Th4.2}. As a consequence, we can define a unitary from $\mathcal{K}$ onto $\mathcal{M}$ by setting $(R, h)\mapsto \mathcal{G}R+BhJ$ for case (\textit{i}) and $h\mapsto BhJ$ for case (\textit{ii}). Furthermore, by using arguments similar to those employed in Proposition \ref{uni1}, we can show that the existence of such a unitary is sufficient to guarantee that elements of $\mathcal{K}$ are completely determined by Equations (\ref{rep-def1}) and (\ref{rep-def2}) for the respective cases. 
\end{remark}

\noindent\underline{{\bf Proof of Theorem \ref{Th4.4}}}

\vspace{.2 cm}

First, we assume that $\mathcal{M}$ has the representation as given in $(i)$. Let $\{G_1, \dots, G_p\}$ be an orthonormal basis of $\mathcal{M}\ominus (\mathcal{M}\cap T_{\mathcal{B^{'}}} H^2(\mathbb{D},\mathbb{C}^m))$. We fix any $F \in \mathcal{M}\cap T_{\mathcal{B^{'}}} H^2(\mathbb{D},\mathbb{C}^m)$. Then there exists $(R,h)\in \mathcal{K}$ such that $F=\mathcal{G}R+BhJ$. 
We claim that $R(0)=0.$ For this, we shall show that $R\perp E_i$ for all $1\le i\le m$. Since $(R, h)\mapsto \cl G R +BhJ$ is a unitary, therefore  
 \begin{eqnarray*}
\langle R,E_i \rangle &=& \langle (R, h), (E_i, 0)\rangle\\
& = & \langle \mathcal{G}R +BhJ, \mathcal{G}E_i + 0\rangle \\
   &=& \langle F, G_i \rangle\\ 
   &=& 0.
\end{eqnarray*}
Thus, $R(0)=0$, which implies that $(T_{B}\otimes I_{\bb C^p})(T_{B}^*\otimes I_{\bb C^p})R=R$. Then, 
\begin{eqnarray*}
F & = & \cl GR+BhJ\\
&=& T_{\cl B}\cl G \Big((T_{B}^*\otimes I_{\bb C^p}) R\Big)+ B\Big(T_BT_B^*h+P_{\cl K_{B}}h\Big)J\\
& = & T_{\cl B}\Big(\cl G (T_{B}^*\otimes I_{\bb C^p})R + T_BT_B^*hJ+P_{\cl K_{B}}h J\Big)
\end{eqnarray*}
where $T_{\cl B}:=T_B\otimes I_{\bb C^m}$ and $\cl K_{B}=H^2(\bb D)\ominus BH^2(\bb D).$ This implies that 
\begin{eqnarray*}
T^*_{\mathcal{B}} F & = & \cl G (T_{B}^*\otimes I_{\bb C^p})R + T_BT_B^*hJ+P_{\cl K_{B}}h J\\
&=& \cl G (T_{B}^*\otimes I_{\bb C^p})R + B(T_B^*h)J+h(0) J\\ 
\end{eqnarray*}

But, $(R, h)\in \cl K$ and $\cl K$ is invariant under $T_{B}\otimes I_{\bb C^{p+1}};$ hence 
$T^*_{\cl B}(F)\in \cl M+span\{J\}$. This shows that $\mathcal{M}$ is nearly $T^*_{B,B^{'}}$-invariant with defect 1. A similar set of arguments works for the case when $\mathcal{M}$ has the representation as given in $(ii)$. This completes the proof.
\qed

The following result is an extension of Theorem \ref{Th4.2} to nearly $T^*_{B, B^{'}}$-invariant subspaces with any finite defect. The techniques required to prove it are quite similar to those employed for Theorem \ref{Th4.2}; so, we avoid its proof and record only its statement for completeness.

\begin{theorem}\label{Th4.5}
Suppose $B$ and $B^{'}$ are two finite Blaschke products such that $B$ divides $B^{'}$, and $B$ vanishes at zero. Let $\mathcal{M}\subset H^2(\mathbb{D},\mathbb{C}^m) $ be a non-zero nearly $T^*_{B,B^{'}}$-invariant subspace with defect $n$, and let $\{J_1, \dots, J_n\}$ be an orthonormal basis of the defect space.
\begin{enumerate} 
\item[(i)] If $\mathcal{M}\not\subset T_{\mathcal{B^{'}}} H^2(\mathbb{D},\mathbb{C}^m)$, then there exists a $T^*_B \otimes I_{\mathbb{C}^{p+n}}$-invariant subspace $\mathcal{K}$ of $ H^2(\mathbb{D},\mathbb{C}^{p+n})$ such that
$$
\mathcal{M}=\Bigg\{F\in H^2(\bb D, \bb C^m): F=\mathcal{G}R + B\sum_{i=1}^{n}h_i J_{i} \ \ {\rm for} 
\ (R,h_1,\dots,h_n)\in\mathcal{K}\Bigg\},
$$
where $\mathcal{G}$ is the matrix with  $p$ columns that form an orthonormal basis of $\mathcal{M}\ominus (\mathcal{M}\cap T_{\mathcal{B'}} H^2(\mathbb{D},\mathbb{C}^m))$. Further, 
for $F= \mathcal{G}R + B\sum_{i=1}^{n}h_i J_{i}\in \mathcal{M}$,
\begin{eqnarray*}
\|F\|^2=\|R\|^2 +\sum_{i=1}^{n} \|h_i\|^2.
\end{eqnarray*}

\item[(ii)] If $\mathcal{M}\subset T_{\mathcal{B^{'}}} H^2(\mathbb{D},\mathbb{C}^m)$, then there exists a $T^*_B \otimes I_{\mathbb{C}^{n}}$-invariant subspace $\mathcal{K}$ of $H^2(\mathbb{D},\mathbb{C}^n)$ such that
$$
\mathcal{M}=\Bigg\{F\in H^2(\bb D, \bb C^m): F=B\sum_{i=1}^{n}h_i J_{i} \ \ {\rm for} \ (h_1,\dots,h_n)\in \mathcal{K} \Bigg\}.
$$
Further, for $F= B\sum_{i=1}^{n}h_i J_{i}\in \mathcal{M}$,
\begin{eqnarray*}
\|F\|^2=\sum_{i=1}^{n} \|h_i\|^2.
\end{eqnarray*}
\end{enumerate}
\end{theorem}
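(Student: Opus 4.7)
The plan is to adapt the proof of Theorem \ref{Th4.2} essentially verbatim, replacing the single defect vector $J$ by an orthonormal basis $\{J_1,\dots,J_n\}$ of the defect space. Since the theorem as stated records only the forward implication, I would focus on constructing the subspace $\mathcal{K}$ and establishing the claimed equality and norm formula.

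For case (i), I would first invoke Lemma \ref{Lem3.9} to conclude that $\mathcal{W}:=\mathcal{M}\ominus(\mathcal{M}\cap T_{\mathcal{B}'}H^2(\mathbb{D},\mathbb{C}^m))$ is finite dimensional, say of dimension $p$, with orthonormal basis $\{G_1,\dots,G_p\}$, and set $\mathcal{G}=[G_1,\dots,G_p]$. Given $F\in\mathcal{M}$, decompose $F=\mathcal{G}A_0+P_{\mathcal{M}\ominus\mathcal{W}}F$ with $A_0\in\mathbb{C}^p$; since $P_{\mathcal{M}\ominus\mathcal{W}}F$ lies in $\mathcal{M}\cap T_{\mathcal{B}'}H^2(\mathbb{D},\mathbb{C}^m)$, the near-invariance hypothesis produces $L_1\in\mathcal{M}$ and scalars $\alpha_{1,1},\dots,\alpha_{1,n}\in\mathbb{C}$ with
\[
T^*_{\mathcal{B}}(P_{\mathcal{M}\ominus\mathcal{W}}F)=L_1+\sum_{i=1}^{n}\alpha_{1,i}J_i,
\]
hence $P_{\mathcal{M}\ominus\mathcal{W}}F=BL_1+B\sum_{i=1}^{n}\alpha_{1,i}J_i$. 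Iterating this step on $L_1\in\mathcal{M}$, after $k$ rounds I obtain
\[
F=\mathcal{G}\sum_{j=0}^{k-1}A_jB^j+B^kL_k+B\sum_{j=1}^{k}B^{j-1}\sum_{i=1}^{n}\alpha_{j,i}J_i,
\]
with the orthogonal decomposition
\[
\|F\|^2=\sum_{j=0}^{k-1}\|A_j\|^2+\|L_k\|^2+\sum_{j=1}^{k}\sum_{i=1}^{n}|\alpha_{j,i}|^2.
\]

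Next, I would show $\|L_k\|\to 0$. Letting $P_1$ and $P_2$ be the orthogonal projections onto $\mathcal{F}^\perp$ and $\mathcal{W}^\perp$ respectively, the iteration identifies $L_k=(P_1T^*_{\mathcal{B}}P_2)^{k-1}L_1$. Since $T_{\mathcal{B}}$ is a $C._0$ contraction of finite-rank defect, Lemma \ref{Lem2.2} applied twice (or applied to the composition of projections) gives that $T_{\mathcal{B}}P_1P_2$ is also $C._0$, which forces $\|L_k\|\to 0$ exactly as in the proof of Theorem \ref{Th4.2}. The square summability coming from the displayed norm identity then yields well-defined limits $R:=\sum_{j=0}^{\infty}A_jB^j\in H^2(\mathbb{D},\mathbb{C}^p)$ and $h_i:=\sum_{j=1}^{\infty}\alpha_{j,i}B^{j-1}\in H^2(\mathbb{D})$ for each $i=1,\dots,n$; passing to the limit (first in norm in $H^2$, then pointwise to match with $F$) produces the representation
\[
F=\mathcal{G}R+B\sum_{i=1}^{n}h_iJ_i,\qquad \|F\|^2=\|R\|^2+\sum_{i=1}^{n}\|h_i\|^2.
\]

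Finally, I would define $\mathcal{K}\subset H^2(\mathbb{D},\mathbb{C}^{p+n})$ as the collection of tuples $(R,h_1,\dots,h_n)$ arising this way from some $F\in\mathcal{M}$, subject to the natural analogues of conditions D1--D3 (namely $P_{\mathcal{W}}L_j=\mathcal{G}A_j$, the $\alpha_{j,i}$ are the scalar coefficients produced by the near-invariance splitting, and the norm identity holds). The same uniqueness argument as in Theorem \ref{Th4.2} shows these conditions determine $(R,h_1,\dots,h_n)$ uniquely from $F$, so $\mathcal{K}$ is a vector space and the map $F\mapsto(R,h_1,\dots,h_n)$ is a linear isometry onto $\mathcal{K}$, which immediately gives closedness of $\mathcal{K}$. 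Invariance of $\mathcal{K}$ under $T^*_B\otimes I_{\mathbb{C}^{p+n}}$ follows by noting that the ``shift'' on the expansion sends the sequences $(A_j)$ and $(\alpha_{j,i})$ to their left shifts, which corresponds exactly to the datum associated with $L_1\in\mathcal{M}$, mirroring the final paragraph of the Theorem \ref{Th3.2} proof. Case (ii) follows by the same scheme with $\mathcal{W}=\{0\}$, dropping the $\mathcal{G}R$ summand throughout.

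The only genuinely new ingredient compared with the defect-one proof is bookkeeping: the defect splitting now produces $n$ scalar sequences $(\alpha_{j,i})_{j\ge 1}$ in parallel, which must be packaged together into an element of $H^2(\mathbb{D},\mathbb{C}^n)$ and shown to transform correctly under the backward shift. I expect the main obstacle to be purely notational---carefully tracking the labels on conditions analogous to D1--D3 for the $n$-tuple $(h_1,\dots,h_n)$ and verifying that the uniqueness argument still forces equality of the $\alpha_{j,i}$---rather than anything conceptually new beyond what is already established in the proofs of Theorems \ref{Th3.2} and \ref{Th4.2}.
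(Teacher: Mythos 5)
The paper itself omits the proof of Theorem \ref{Th4.5}, stating only that it follows by the techniques of Theorem \ref{Th4.2}, and your proposal is exactly that intended adaptation: the same decomposition $F=\mathcal{G}A_0+P_{\mathcal{M}\ominus\mathcal{W}}F$, the same iteration via the near-invariance splitting (now producing $n$ scalar sequences), the same $C._0$ argument via Lemma \ref{Lem2.2} to kill $L_k$, and the same construction of $\mathcal{K}$ with the analogues of D1--D3. This is correct and matches the paper's approach.
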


\begin{remark}
The converse of Theorem \ref{Th4.5} is also true. Its statement is similar to Theorem \ref{Th4.4}, with pertinent changes, and it can be proved by following the arguments similar to those used in Theorem \ref{Th4.4}.    
\end{remark}

\bibliography{refs.bib}

@article {KLD,
    AUTHOR = {Khan, A. and Lata, S. and Singh, D.},
     TITLE = {Nearly invariant {B}rangesian subspaces},
   JOURNAL = {Canad. Math. Bull.},
  FJOURNAL = {Canadian Mathematical Bulletin. Bulletin Canadien de
              Math\'ematiques},
    VOLUME = {68},
      YEAR = {2025},
    NUMBER = {1},
     PAGES = {318--337},
      ISSN = {0008-4395,1496-4287},
   MRCLASS = {47A15 (30H10 47B32)},
  MRNUMBER = {4862244},
       DOI = {10.4153/S0008439524000687},
       URL = {https://doi.org/10.4153/S0008439524000687},
}

@article {CDP1,
    AUTHOR = {Chattopadhyay, A. and Das, S. and Pradhan, C.},
     TITLE = {Almost invariant subspaces of the shift operator on
              vector-valued {H}ardy spaces},
   JOURNAL = {Integral Equ. Oper. Theory.},
  FJOURNAL = {Integral Equations and Operator Theory},
    VOLUME = {92},
      YEAR = {2020},
    NUMBER = {6},
     PAGES = {Paper No. 52, 15},
      ISSN = {0378-620X,1420-8989},
   MRCLASS = {47A13 (30H10 46E20 47A15 47A80 47B32 47B38)},
  MRNUMBER = {4181859},
MRREVIEWER = {Shuaibing\ Luo},
       DOI = {10.1007/s00020-020-02612-6},
       URL = {https://doi.org/10.1007/s00020-020-02612-6},
}

@article {CDP2,
    AUTHOR = {Chattopadhyay, A. and Das, S. and Pradhan, C.},
     TITLE = {Kernels of perturbed {T}oeplitz operators in vector-valued
              {H}ardy spaces},
   JOURNAL = {Adv. Oper. Theory},
  FJOURNAL = {Advances in Operator Theory},
    VOLUME = {6},
      YEAR = {2021},
    NUMBER = {3},
     PAGES = {Paper No. 49, 28},
      ISSN = {2662-2009,2538-225X},
   MRCLASS = {47A13 (30H10 46E20 47A15 47B32 47B35 47B38)},
  MRNUMBER = {4265780},
MRREVIEWER = {N.\ L.\ Vasilevski\u i},
       DOI = {10.1007/s43036-021-00146-1},
       URL = {https://doi.org/10.1007/s43036-021-00146-1},
}

@article {LP1,
    AUTHOR = {Liang, Y. and Partington, J. R.},
     TITLE = {Nearly invariant subspaces for shift semigroups},
   JOURNAL = {Sci. China Math.},
  FJOURNAL = {Science China. Mathematics},
    VOLUME = {65},
      YEAR = {2022},
    NUMBER = {9},
     PAGES = {1895--1908},
      ISSN = {1674-7283,1869-1862},
   MRCLASS = {47B38 (47A15 47D06)},
  MRNUMBER = {4470358},
       DOI = {10.1007/s11425-020-1915-y},
       URL = {https://doi.org/10.1007/s11425-020-1915-y},
}

@article {LP2,
    AUTHOR = {Liang, Y. and Partington, J. R.},
     TITLE = {Cyclic nearly invariant subspaces for semigroups of
              isometries},
   JOURNAL = {Math. Z.},
  FJOURNAL = {Mathematische Zeitschrift},
    VOLUME = {307},
      YEAR = {2024},
    NUMBER = {3},
     PAGES = {Paper No. 58, 20},
      ISSN = {0025-5874,1432-1823},
   MRCLASS = {47B38 (43A15 47A15)},
  MRNUMBER = {4761835},
       DOI = {10.1007/s00209-024-03534-4},
       URL = {https://doi.org/10.1007/s00209-024-03534-4},
}

@article{LP3,
  title={Representing kernels of perturbations of {T}oeplitz operators by backward shift-invariant subspaces},
  author={Liang, Y. and Partington, J. R.},
  journal={Integral Equ. Oper. Theory.},
  volume={92},
  number={4},
  pages={35},
  year={2020},
  publisher={Springer}
}

@article {Sar,
    AUTHOR = {Sarason, D.},
     TITLE = {Nearly invariant subspaces of the backward shift},
 JOURNAL = {Oper. {T}heory: Advances {A}pplicat.},
    FJOURNAL = {Operator Theory: Advances Applications},
    VOLUME = {35},
     PAGES = {481--493},
 PUBLISHER = {Birkh\"auser, Basel},
      YEAR = {1988},
      ISBN = {3-7643-2221-7},
   MRCLASS = {47A15 (30H05 46E20 47B35 47B38)},
  MRNUMBER = {1017680},
MRREVIEWER = {James\ Rovnyak},
}

@article {CGP,
    AUTHOR = {Chalendar, I. and Gallardo-Guti\'errez, E. A. and
              Partington, J. R.},
     TITLE = {A {B}eurling theorem for almost-invariant subspaces of the
              shift operator},
   JOURNAL = {J. Oper. Theory.},
  FJOURNAL = {Journal of Operator Theory},
    VOLUME = {83},
      YEAR = {2020},
    NUMBER = {2},
     PAGES = {321--331},
      ISSN = {0379-4024,1841-7744},
   MRCLASS = {47B37 (47A15 47B38)},
  MRNUMBER = {4078702},
MRREVIEWER = {V.\ S.\ Pilidi},
       DOI = {10.7900/jot},
       URL = {https://doi.org/10.7900/jot},
}

@article{DS,
  title={Invariant subspaces of perturbed backward shift},
  author={Das, S. and Sarkar, J.},
  journal={arXiv preprint arXiv:2407.17352},
  year={2024}
}

@article {CCP,
    AUTHOR = {Chalendar, I. and Chevrot, N. and Partington, J. R.},
     TITLE = {Nearly invariant subspaces for backwards shifts on
              vector-valued {H}ardy spaces},
   JOURNAL = {J. Oper. Theory.},
  FJOURNAL = {Journal of Operator Theory},
    VOLUME = {63},
      YEAR = {2010},
    NUMBER = {2},
     PAGES = {403--415},
      ISSN = {0379-4024,1841-7744},
   MRCLASS = {46E20 (30H10 47A15 47B38)},
  MRNUMBER = {2651921},
MRREVIEWER = {Stephan\ R.\ Garcia},
}

@article {CD,
    AUTHOR = {Chattopadhyay, A. and Das, S.},
     TITLE = {Study of nearly invariant subspaces with finite defect in
              {H}ilbert spaces},
   JOURNAL = {Proc. Indian Acad. Sci. Math. Sci.},
  FJOURNAL = {Indian Academy of Sciences. Proceedings. Mathematical
              Sciences},
    VOLUME = {132},
      YEAR = {2022},
    NUMBER = {1},
     PAGES = {Paper No. 10, 26},
      ISSN = {0253-4142,0973-7685},
   MRCLASS = {47A15 (30J10 47A45)},
  MRNUMBER = {4391036},
MRREVIEWER = {Yixin\ Yang},
       DOI = {10.1007/s12044-022-00654-x},
       URL = {https://doi.org/10.1007/s12044-022-00654-x},
}

@article {Era,
    AUTHOR = {Erard, C.},
     TITLE = {Nearly invariant subspaces related to multiplication operators
              in {H}ilbert spaces of analytic functions},
   JOURNAL = {Integral Equ. Oper. Theory.},
  FJOURNAL = {Integral Equations and Operator Theory},
    VOLUME = {50},
      YEAR = {2004},
    NUMBER = {2},
     PAGES = {197--210},
      ISSN = {0378-620X,1420-8989},
   MRCLASS = {47B32 (32A70 46E20 47A15 47B35)},
  MRNUMBER = {2099789},
MRREVIEWER = {A.\ A.\ Borichev},
       DOI = {10.1007/s00020-003-1292-2},
       URL = {https://doi.org/10.1007/s00020-003-1292-2},
}

@article {LP,
    AUTHOR = {Liang, Y. and Partington, J. R.},
     TITLE = {Nearly invariant subspaces for operators in {H}ilbert spaces},
   JOURNAL = {Complex Anal. Oper. Theory.},
  FJOURNAL = {Complex Analysis and Operator Theory},
    VOLUME = {15},
      YEAR = {2021},
    NUMBER = {1},
     PAGES = {Paper No. 5, 17},
      ISSN = {1661-8254,1661-8262},
   MRCLASS = {47B38 (47A15)},
  MRNUMBER = {4179960},
       DOI = {10.1007/s11785-020-01050-x},
       URL = {https://doi.org/10.1007/s11785-020-01050-x},
}

@article {Hit,
    AUTHOR = {Hitt, D.},
     TITLE = {Invariant subspaces of {${\scr H}^2$} of an annulus},
   JOURNAL = {Pacific J. Math.},
  FJOURNAL = {Pacific Journal of Mathematics},
    VOLUME = {134},
      YEAR = {1988},
    NUMBER = {1},
     PAGES = {101--120},
      ISSN = {0030-8730,1945-5844},
   MRCLASS = {46E20 (46J15 47A15 47B38)},
  MRNUMBER = {953502},
MRREVIEWER = {John\ N.\ McDonald},
       URL = {http://projecteuclid.org/euclid.pjm/1102689368},
}

@article {BT,
    AUTHOR = {Benhida, C. and Timotin, D.},
     TITLE = {Finite rank perturbations of contractions},
   JOURNAL = {Integral Equ. Oper. Theory.},
  FJOURNAL = {Integral Equations and Operator Theory},
    VOLUME = {36},
      YEAR = {2000},
    NUMBER = {3},
     PAGES = {253--268},
      ISSN = {0378-620X,1420-8989},
   MRCLASS = {47A45 (47A55)},
  MRNUMBER = {1753419},
MRREVIEWER = {L\'aszl\'o\ K\'erchy},
       DOI = {10.1007/BF01213924},
       URL = {https://doi.org/10.1007/BF01213924},
}

@book{RR1,
  title={Hardy classes and operator theory},
  author={Rosenblum, M. and Rovnyak, J.},
  year={1997},
  publisher={Courier Corporation}
}

@book{BS,
  title={Analysis of {T}oeplitz operators},
  author={B{\"o}ttcher, A. and Silbermann, B.},
  year={2013},
  publisher={Springer Science \& Business Media}
}

@article{AS,
  title={Products of {T}oeplitz operators},
  author={Axler, S. and Chang, Sun-Yung  and Sarason, D.},
  journal={Integral Equ. Oper. Theory.},
  volume={1},
  pages={285--309},
  year={1978},
  publisher={Springer}
}

\end{document}